\documentclass{amsart}

\usepackage{amsmath,amsthm,amsfonts,amssymb,latexsym}

\newtheorem{thm}{Theorem}[section]
\newtheorem{cor}[thm]{Corollary}
\newtheorem{lemma}[thm]{Lemma}
\newtheorem{remark}[thm]{Remark}

\newtheorem{open}[thm]{Open Problem}
\theoremstyle{definition}
\newtheorem{defn}[thm]{Definition}

\newcommand{\from}{\colon}

\newcommand{\Z}{\mathbb{Z}}
\newcommand{\N}{\mathbb{N}}
\newcommand{\R}{\mathbb{R}}
\newcommand{\F}{\mathbb{F}}


\renewcommand{\subset}{\subseteq}
\renewcommand{\supset}{\supseteq}

\newcommand{\card}[1]{\lvert #1 \rvert}

\newcommand{\union}{\cup}
\newcommand{\bigunion}{\bigcup}
\newcommand{\inters}{\cap}

\DeclareMathOperator{\Stab}{Stab}
\DeclareMathOperator{\Irr}{Irr}

\newcommand{\<}{\langle}
\renewcommand{\>}{\rangle}

\newcommand{\define}[1]{\textbf{#1}} 

\newcommand{\actson}{\curvearrowright}

\newcommand{\powset}{\mathcal{P}}
\newcommand{\PP}{\powset_{\text{pr}}}

\newcommand{\SO}{\mathsf{SO}}

\newcommand{\PSL}{\mathsf{PSL}}
\newcommand{\PGL}{\mathsf{PGL}}
\renewcommand{\P}{\mathsf{P}}

\newcommand{\E}{\mathrel{E}}
\newcommand{\comp}[1]{{#1}^{\text{c}}}

\begin{document}

\title{Measurable realizations of abstract systems of congruences}

\thanks{The first author was partially supported by NSF grant DMS-1500906 and
DMS-1855579. The second author was partially supported by NSF grant
DMS-1500974 and DMS-1764174. The third author was partially supported by NSF grant
DMS-1700425}

\author{Clinton T.~Conley}
\address{Department of Mathematical Sciences,
Carnegie Mellon University, Pittsburgh, PA 15213}
\email{clintonc@andrew.cmu.edu}

\author{Andrew S. Marks}
\address{Department of Mathematics, University of California at Los Angeles}
\email{marks@math.ucla.edu}

\author{Spencer T. Unger}
\address{Department of Mathematics, Hebrew University of Jerusalem}
\email{unger.spencer@mail.huji.ac.il}

\date{\today}

\begin{abstract}
An abstract system of congruences describes a way of partitioning a space
into finitely many pieces satisfying certain congruence relations. Examples
of abstract systems of congruences include paradoxical decompositions and
$n$-divisibility of actions. We consider the general question of when there
are realizations of abstract systems of congruences satisfying various
measurability constraints. We completely characterize which abstract
systems of congruences can be realized by nonmeager Baire measurable pieces
of the sphere under the action of rotations on the $2$-sphere. This answers
a question of Wagon. We also construct Borel realizations of abstract
systems of congruences for the action of $\PSL_2(\Z)$ on $\P^1(\R)$. The
combinatorial underpinnings of our proof are certain types of decomposition
of Borel graphs into paths. We also use these decompositions to obtain some
results about measurable unfriendly colorings.
\end{abstract}

\maketitle

\section{Introduction}

Recently, several results have been proved about the extent to which realizations of
geometrical paradoxes can be found with sets having measurability
properties such as being Borel, Lebesgue measurable, or Baire measurable (see for
instance \cite{CS}\cite{DF}\cite{GMP16}\cite{GMP17}\cite{Ma}\cite{MU16}\cite{MU17}). This is
a growing area of study at the interface of descriptive set theory,
combinatorics, and ergodic theory. This paper is a contribution to this study.
One of the
earliest results in this vein is the theorem of Dougherty and Foreman \cite{DF}
that the Banach-Tarski paradox can be realized using Baire measurable
pieces. In contrast to the classical Banach-Tarski paradox which uses five
pieces, Dougherty and Foreman's Baire measurable solution uses six pieces.
A result of Wehrung \cite{Weh} implies that this is optimal; there is no
Baire measurable realization of the Banach-Tarski paradox with five pieces.
This suggests a subtle difference
between the classical and Baire measurable contexts.

In this paper, we consider a refined framework called ``abstract systems of
congruence'' for describing when an
action can be partitioned into finitely many pieces satisfying certain
congruence relations. As one application, we give an exact
characterization of which abstract systems of congruences can be realized in
the $2$-sphere with arbitrary pieces versus nonmeager Baire measurable
pieces. This refines the dual results of Wehrung \cite{Weh}, and Dougherty
and
Foreman \cite{DF}.

We formally define abstract systems of congruences as follows.
Given a set $S$, its \define{proper powerset} $\PP(S)$ is $\PP(S) = \{R \subset S
\colon R \neq \emptyset \land R \neq S\}$.
Following Wagon \cite[Definition 4.10]{W}, an \define{abstract system of
congruences} on $n  = \{0, \ldots, n-1\}$ is an equivalence relation
$E$ on $\PP(n)$, so that if $U \E V$, then $\comp{U} \E
\comp{V}$. Here $\comp{U}$ denotes the complement of $U$.
Suppose $a \from \Gamma \actson X$ is an
action of a group on a set $X$. Then we say that $A, B \subset X$ are
$a$-congruent if there is a group element $\gamma \in \Gamma$ such that
$\gamma \cdot A = B$. An \define{$a$-realization} of an abstract
system of congruences $E$ is a
partition $\{A_0, \ldots, A_{n-1}\}$ of $X$ such that for all $U,V \in
\PP(n)$ with $U \mathrel{E} V$, we have that $\bigunion_{i \in U} A_i$ and
$\bigunion_{i \in V} A_i$ are $a$-congruent. The definition of an
abstract system of congruences reflects that fact that congruence is an
equivalence relation, and that if $A, B \subset X$ are congruent, then
$\comp{A}$ and $\comp{B}$ are also congruent.

An important example of an abstract system of congruences is the smallest
abstract system of congruences $E$ on
$\PP(4)$ containing the relations $\{0\} \E \{0, 1, 2\}$ and $\{1\} \E
\{0, 1, 3\}$. A realization of this system gives a paradoxical
decomposition, since $\{0,3\}$ and $\{1,2\}$ partition $\{0,1,2,3\}$.
The
translation action of the free group on two generators $\F_2$ on itself is an
example of an action realizing this system of congruences \cite[Theorem
4.2]{W}.
Another important
example of an abstract system of congruences is the smallest abstract
system of congruences
$E$ on $\PP(n)$ where $\{i\} \E \{j\}$ for every $i,j \in n$. An action is said
to be \define{$n$-divisible} if it satisfies this system of congruences (that it, is can
be partitioned into $n$ congruent pieces).  For example, it is easy to see that
the action of the rotation group $\SO_{3}$ on the $2$-sphere is not
$2$-divisible by considering the ``poles'' of the rotation.  However, this
action is $n$-divisible for $n \geq 3$ (see \cite[Corollary 4.14]{W}).

Wagon has characterized which abstract systems of congruences can be
realized in the action of the group $\SO_3$ of rotations on the $2$-sphere.
We say that an abstract
system of congruences $E$ on $n$ is
\define{non-complementing} if there is no set $X \in \PP(n)$
such that $X \E \comp{X}$.

\begin{thm}[{\cite[Corollary 4.12]{W}}]\label{Wagon_abstract} Suppose $E$ is an
abstract system of congruences.
$E$ can be realized in the action of $\SO_3$ on the $2$-sphere if and only if $E$ is non-complementing.
\end{thm}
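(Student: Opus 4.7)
The plan is to prove both implications; the forward direction is short and the backward direction is the substantial part. For necessity, suppose for contradiction that some $X \in \PP(n)$ satisfies $X \E \comp{X}$. Any $\SO_3$-realization $\{A_0, \ldots, A_{n-1}\}$ would then make $A := \bigunion_{i \in X} A_i$ and $\comp{A} = \bigunion_{i \in \comp{X}} A_i$ mutually $\SO_3$-congruent, giving a partition of $S^2$ into two congruent halves. But the excerpt already records that $\SO_3 \actson S^2$ is not $2$-divisible (every nontrivial rotation fixes two poles), so $E$ must be non-complementing.

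For sufficiency, the plan is to exploit Hausdorff's theorem that $\SO_3$ contains a free subgroup of rank $2$, and hence free subgroups of any finite rank. Fix $F \leq \SO_3$ to be a free group whose rank is at least the number of distinct relations $E$ asks us to witness. The action of $F$ on $S^2$ is free off a countable set $D$ of poles, and on $S^2 \setminus D$ it decomposes into orbits each isomorphic to $F$ acting on itself by left translation. It therefore suffices to build a realization of $E$ on $F$ and then transfer.

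To build the realization on $F$: by non-complementing, the $E$-classes of $\PP(n)$ pair up as $\{\mathcal{C}, \comp{\mathcal{C}}\}$ where $\comp{\mathcal{C}} := \{\comp{U} : U \in \mathcal{C}\}$ is a distinct class. In each pair choose a representative $U_\mathcal{C}$ and, for every $V \in \mathcal{C}$, choose a fresh free generator $g_V \in F$ that will witness $g_V \cdot \bigunion_{i \in U_\mathcal{C}} A_i = \bigunion_{i \in V} A_i$ (with $g_{U_\mathcal{C}} = 1$). First I would seed an assignment at the identity of $F$; then propagate outward along the Cayley tree, at each step reading off required memberships from these intended equations. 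Because the $g_V$ are distinct free generators the constraints travel along disjoint branches of the tree, and non-complementing is precisely the hypothesis needed to rule out the only residual contradiction, namely being forced to set some $A = \comp{A}$.

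Finally, I would transfer the partition from $S^2 \setminus D$ to $S^2$ by absorbing the countable pole set via the classical Banach--Tarski trick: choose a rotation $\rho \in \SO_3$ whose positive $\rho$-orbit of $D$ is free and disjoint from $D$, and cyclically push $D$ into a single chosen piece without disturbing any of the required congruences. The main obstacle is the combinatorial core on $F$: verifying that the Cayley-tree propagation produces a well-defined, total partition consistent with every $E$-relation and all of their compositions. Non-complementing is both necessary and sufficient for this consistency, which is the heart of the argument.
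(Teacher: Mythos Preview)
Your forward direction and the Cayley-tree propagation on free $F$-orbits are both fine, but the absorption step for the pole set $D$ does not work. The set $D$ of points with nontrivial $F$-stabilizer is $F$-invariant (stabilizers conjugate), so every $g \in F$ satisfies $g \cdot D = D$. If you park all of $D$ in a single piece $A_0$, then for any pair $U \E V$ with $0 \in U \symdiff V$ your chosen witness $g_V \in F$ fails: $g_V \cdot \bigunion_{i \in U} A_i$ contains $D$ iff $0 \in U$, while $\bigunion_{i \in V} A_i$ contains $D$ iff $0 \in V$. The Banach--Tarski absorption trick only yields \emph{equidecomposability} of $S^2$ and $S^2 \setminus D$, not congruence by a single rotation, and switching to a witness outside $F$ would destroy the congruences you built on $S^2 \setminus D$. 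Relatedly, your diagnosis of where non-complementing is needed is inverted: on a tree there is no obstruction at all, since at each edge the next label need only lie in some proper nonempty $V$ or $\comp{V}$.

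The paper does not prove this theorem itself (it is quoted from Wagon), but Lemma~\ref{wagon_realization_lemma} records the actual mechanism. One does not discard $D$; one uses that every point stabilizer in $\SO_3$ is cyclic, so on each $F$-orbit the Schreier graph has at most one cycle. The real combinatorial content is labeling the vertices around that single cycle consistently with the generator constraints, and a minimality analysis of a good generating set for $E$ shows this can always be done when $E$ is non-complementing---this is precisely where the hypothesis enters the sufficiency proof. Once the cycle is labeled, the remainder of the orbit is a forest hanging off it and is handled by exactly the tree propagation you described.
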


We show that in order to realize an abstract system of congruences with Baire
measurable pieces in the sphere, we need one additional property. Say that an
abstract system of congruences $E$ on $\PP(n)$ is \define{non-expanding} if
there do not exist sequences of sets $(V_i)_{i \leq k}$ and $(W_i)_{i \leq k}$
where $V_i \E W_i$ for every $i \leq k$ and $W_i \subset V_{i+1}$ for every $i <
k$, such that $W_k \subsetneq V_0$. Hence,
\[V_0 \E W_0 \subset V_1 \E W_1 \subset \ldots V_k \E W_k \subsetneq V_0.\]

\begin{thm}\label{Baire_abstract}
  Suppose $E$ is an abstract system of congruences.
  Then $E$ can be realized in action of $\SO_3$ on the
  $2$-sphere with Baire measurable pieces each of which is nonmeager if
  and only if $E$ is non-complementing and non-expanding.
\end{thm}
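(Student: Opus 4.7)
I would prove necessity and sufficiency separately.

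For necessity, non-complementing is immediate from Theorem~\ref{Wagon_abstract}, since a Baire measurable realization is in particular a realization. For non-expanding, suppose toward contradiction that $\{A_0, \ldots, A_{n-1}\}$ is a nonmeager Baire measurable realization and that $V_0 \E W_0 \subset V_1 \E W_1 \subset \cdots \subset V_k \E W_k \subsetneq V_0$ is an expansion witness. Writing $B_U := \bigcup_{i \in U} A_i$ and choosing $\gamma_j \in \SO_3$ with $\gamma_j B_{V_j} = B_{W_j}$, a chase along the chain yields $\alpha := \gamma_k \cdots \gamma_0$ satisfying $\alpha B_{V_0} \subset B_{W_k} \subsetneq B_{V_0}$; the difference $B_{V_0} \setminus \alpha B_{V_0}$ contains $B_{V_0 \setminus W_k}$, which is nonmeager since each $A_i$ is. Iterating the strict inclusion rules out finite order, so $\alpha$ is an irrational rotation. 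Applying Kuratowski--Ulam in spherical coordinates reduces to a generic parallel circle $C$, on which $\alpha$ acts as an irrational rotation; a generic point of $(B_{V_0} \setminus \alpha B_{V_0}) \cap C$ has $\alpha$-backward orbit disjoint from $B_{V_0} \cap C$, but by topological minimality this orbit is dense in $C$, while the Baire property of $B_{V_0} \cap C$ produces a nonempty open set contained in it modulo meager---a contradiction.

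For sufficiency I would reduce to a Baire measurable coloring problem on a Borel tree. Fix a finite generating set $(U_j, V_j)_{j < m}$ for the equivalence relation $E$ (which is closed under complementation), and pick rotations $\gamma_0, \ldots, \gamma_{m-1} \in \SO_3$ generically so that they freely generate a rank-$m$ free subgroup whose action on the complement $X$ of the countable set of fixed points of nontrivial words is free. The Schreier graph of this free action is a Borel $2m$-regular forest of trees, and a realization of $E$ corresponds to a coloring $\phi \from X \to n$ satisfying, for each $j < m$ and each $x \in X$, the edge constraint $\phi(x) \in U_j \Iff \phi(\gamma_j \cdot x) \in V_j$. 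The excluded countable set of fixed points can be handled separately using the arbitrary-pieces realization from Theorem~\ref{Wagon_abstract}.

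The combinatorial heart is solving this constraint satisfaction problem in a Baire measurable way while keeping every color class nonmeager, and I would use the path-decomposition machinery for Borel graphs advertised in the abstract: decompose each Schreier tree into countably many bi-infinite Borel paths, inductively construct a valid coloring along each path, and align the choices across branchings. The non-expanding hypothesis is precisely what permits path-wise solvability---an expansion witness in $E$ would correspond to a strictly shrinking chain of admissible label sets along some path, incompatible with any global labeling. Non-complementing rules out the degenerate case in which the only admissible path colorings are globally flippable complementary pairs, which would obstruct both the alignment between paths and the nonmeagerness of every color class. The main obstacle is establishing a sharp combinatorial lemma asserting that every non-complementing, non-expanding system admits a sufficiently flexible family of valid colorings of the two-way-infinite path to be stitched together, by Baire-generic choices, into a globally consistent realization in which every $A_i$ is nonmeager on every nonempty open set.
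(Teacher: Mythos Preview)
Your overall strategy matches the paper's---necessity via a compression argument under the composite rotation, sufficiency via a free $\F_k$-action on the sphere, a path decomposition of the Schreier forest, and a greedy labeling along paths---but several steps are not right as written.

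In the necessity argument your final sentence does not work: the backward $\alpha$-orbit of a single point is countable, so its density in $C$ does not force it to meet a set that is merely comeager in some open $U\subset C$. The paper avoids this by working with sets rather than a single orbit: since $A\setminus B$ meets each $\alpha$-orbit in at most one point, if it were comeager in an open $U$ one picks $n$ with $\alpha^n U\cap U\neq\emptyset$ (possible because powers of an irrational rotation approach the identity) and notes that $A\setminus B$ and $\alpha^n(A\setminus B)$ are both comeager there, hence intersect. This is an easy repair; the Kuratowski--Ulam detour is unnecessary.

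For sufficiency there are two genuine issues. First, you propose decomposing the Schreier tree into \emph{bi-infinite} Borel paths. The paper instead produces, on an invariant comeager set $D$, a Borel decomposition into \emph{finite} paths of length at least $r$ (Lemma~\ref{path_decomp}); the key combinatorial lemma (inside Lemma~\ref{realization_lemma}) shows that for $r$ large enough there are no ``bad'' words of length $r$, i.e., any prescription of colors at the two endpoints of such a path extends to a valid labeling of its interior. The finiteness is what makes the greedy end-ordered construction go through, and it is not clear how one would obtain a Borel decomposition into bi-infinite lines on a comeager set. Second, your account of the role of non-complementing is off: the path-labeling lemma uses only non-expanding together with minimality of a good generating set, and non-complementing plays no role in the path combinatorics. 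Non-complementing enters only through the geometry of $\SO_3$: it is necessary for \emph{any} realization on $S^2$ (Theorem~\ref{Wagon_abstract}, via poles of rotations), and it guarantees that the group attached to a minimal good generating set has no order-two generators, hence is $\F_k$ and embeds as rotations. Finally, note that the Borel labeling is obtained only on the comeager invariant set $D$, not on the full free part; on the meager complement the paper invokes the unrestricted realization of Lemma~\ref{wagon_realization_lemma} and overwrites it on $D$, a step your outline omits.
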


This theorem positively answers Wagon's question \cite[Page 47]{W} of
whether the $2$-sphere is $n$-divisible with Baire measurable pieces for $n
\geq 3$. Indeed, the smallest abstract system of congruences $E$ containing the
relations $\{1\} \mathrel{E} \{2\} \mathrel{E} \ldots \mathrel{E} \{n\}$ is
clearly noncomplementing and nonexpanding for $n \geq 3$, and hence has a
Baire measurable realization in the action of $\SO_3$ on the $2$-sphere.
Wagon has also asked whether the $2$-sphere is $n$-divisible into
Lebesgue measurable pieces. This remains an open problem.

Let $\F_n$ be the free group on $n$ generators.
Our proof of Theorem~\ref{Baire_abstract} shows more generally that if $n
\geq 2$, then
any free Borel action of $\F_n$
on a Polish space $X$ can realize an abstract system of congruences that is
non-expanding and non-complementing using Baire measurable pieces. (See
Lemma~\ref{realization_lemma}).

Our main tool for proving Theorem~\ref{Baire_abstract} is a decomposition
lemma for acyclic Borel graphs into sets of paths with a property
concerning how the paths from different sets may overlap.

\begin{defn}
  Suppose $G$ is a graph and $G_0, G_1, \ldots$ is a sequence of subgraphs
  of $G$. Then we say $G_0, G_1, \ldots$ is \define{end-ordered} if for
  all vertices $x$ in $G$, if $x$ is a vertex in $G_i$ and $G_j$ where $i <
  j$, then $x$ is a leaf in $G_j$. Similarly, if $S_0, S_1, \ldots$ are
  sets of subgraphs of $G$, then we say that $S_0, S_1, \ldots$ is
  \define{end-ordered} if for all vertices $x$ in $G$, if $x$ is vertex in
  $H \in S_i$ and a vertex in $H' \in S_j$ where $i < j$, then $x$ is a
  leaf in $H'$.
\end{defn}

\begin{defn}
  Suppose $G$ is an acyclic Borel graph. Then a \define{path decomposition} of
  $G$ is a sequence $P_0, P_1, \ldots$ of sets of paths in $G$ such
  that $P_0, P_1, \ldots$ is
  end ordered, every
  $P_i$ consists of vertex disjoint paths, and for every edge $e$ in $G$,
  there exists exactly one $P_i$ so that $e$ appears in a path in $P_i$.
  We
  say that a path decomposition is Borel if each set $P_i$ is Borel, and
  the path decomposition has length at least $n$ if every path has length
  at least $n$.
\end{defn}

Roughly, a path decomposition is a way of covering the graph with sets of
paths $P_0, P_1, \ldots$ so that all the paths in $P_j$ have interiors that
are disjoint from the paths in $P_i$, for $i < j$.

One of our main lemmas (Lemma~\ref{path_decomp}) says that if $G$ is a
locally finite acyclic Borel graph, then for all $n$, there is a comeager set on
which $G$ has a Borel path decomposition of length at least $n$.

A different case in which we have Borel path decompositions is when we have
Borel end selections.
Recall that if $G$ is a graph on $X$, a \define{ray} is an
infinite simple path in $G$, and that two rays $(x_i)_{i \in \N}$ and $(y_i)_{i
\in \N}$ are \define{end-equivalent} if for every finite set $S \subset X$,
the rays $(x_i)$ and $(y_i)$ eventually lie in the same connected component
of $G \restriction (X \setminus S)$. An \define{end} of $G$ is an end-equivalence
class of $G$. If $G$ is a Borel graph on $X$, we say that \define{$G$
admits a
Borel selection of finitely $k$ ends in each connected component} if there
are Borel functions $r_0, \ldots, r_{k-1}$  sending each $x \in X$ to $k$
end-inequivalent rays $r_0(x), \ldots, r_k(x)$ in the connected
component of $x$ such that if $y$ are in the same connected component of
$G$ as $x$, then
$\{r_0(x), \ldots, r_{k-1}(x)\}$ and $\{r_0(y), \ldots, r_{k-1}(y)\}$
are representatives of the same set of ends.
We say that \define{$G$ admits a
Borel selection of finitely many ends in each connected component} if $G$
can be partitioned into countably many invariant Borel sets $A_0, A_1,
\ldots$
so that for each $i$, there is some $k$ so that $G \restriction A_i$
has a Borel selection of $k$ ends in each connected component.

We show that
if $G$ is an acyclic bounded degree Borel graph on $X$ such that there is
a Borel selection of finitely many ends in every connected component of
$G$, then for every $n$ we can find a Borel path decompositions of $G$ of
length at least $n$ (see
Lemma~\ref{end_sel_path_decomp}).
We construct explicit realizations of
abstract systems of congruences for the action of $\PSL_2(\Z)$
on $P^1(\R)$, by combining this lemma with an explicit end selection
defined using continued fraction expansions.

\begin{thm}\label{PSL2_thm}
  Suppose $E$ is an abstract system of congruences which is
  non-complementing and non-expanding. Then $E$ can be
  realized in the action of
  $\PSL_2(\Z)$ on $P^1(\R)$ by Borel pieces.
\end{thm}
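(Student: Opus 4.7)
The strategy is to transfer the path-decomposition realization technique of Theorem~\ref{Baire_abstract} to a Borel setting by substituting Lemma~\ref{end_sel_path_decomp} for Lemma~\ref{path_decomp}, feeding it a Borel end selection built from continued fractions. Let $D \subset \P^1(\R)$ be the countable $\PSL_2(\Z)$-invariant set of points fixed by some nontrivial element of $\PSL_2(\Z)$ (the rational cusps, together with the quadratic irrationals fixed by hyperbolic elements). On $X = \P^1(\R) \setminus D$, a Polish invariant subspace, $\PSL_2(\Z)$ acts freely. Fix a free subgroup $\Gamma \cong \F_2 \le \PSL_2(\Z)$ of finite index (e.g.\ the commutator subgroup, or $\Gamma(2)$) and let $G$ be the Schreier graph of its action on $X$ with respect to two free generators; then $G$ is a $4$-regular acyclic Borel graph on $X$.

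Next, I would use continued fractions to define a Borel end selection for $G$. For $x \in X$ with expansion $[a_0; a_1, a_2, \ldots]$, the Gauss map $T(x) = 1/x - \lfloor 1/x \rfloor$ is implemented by a $\PSL_2(\Z)$-element depending Borelly on $x$, and two irrationals are $\PSL_2(\Z)$-equivalent if and only if their continued fraction tails eventually coincide. The forward Gauss orbit of $x$ therefore traces a canonical ray in the $\PSL_2(\Z)$-Schreier graph whose end is a $\PSL_2(\Z)$-orbit invariant. Pushing this through the finite-index inclusion $\Gamma \le \PSL_2(\Z)$---by partitioning $X$ into countably many $\Gamma$-invariant Borel pieces according to the eventual pattern of $\PSL_2(\Z)/\Gamma$-cosets visited by the Gauss iterates---produces on each piece a Borel selection of a fixed finite number of ends in each connected component of $G$. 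Applying Lemma~\ref{end_sel_path_decomp} to each piece then supplies, for every $n$, a Borel path decomposition of $G$ of length at least $n$.

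With arbitrarily long Borel path decompositions in hand, I would run the argument proving Lemma~\ref{realization_lemma}: the non-expanding hypothesis lets one propagate a consistent labeling along each path, and non-complementing supplies a coherent boundary choice, so sufficiently long paths realize the congruences in $E$. Borelness of the output pieces follows from Borelness of the underlying path decomposition, and because $\Gamma \le \PSL_2(\Z)$ every $\Gamma$-congruence is automatically a $\PSL_2(\Z)$-congruence; this yields a Borel realization of $E$ on $X$. The exceptional set $D$ is then handled separately as a bookkeeping step: every subset of the countable set $D$ is Borel, so one only needs a set-theoretic realization of $E$ on $D$ that is compatible with the group elements already chosen on $X$, which can be arranged because $\Gamma$ acts on $D$ with countably many orbits each having controlled (cyclic or trivial) stabilizer.

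The main obstacle I anticipate is the Borel end-selection step. Continued fractions canonically produce a single end per $\PSL_2(\Z)$-orbit, but translating this into an end selection of the $\Gamma$-Schreier graph requires the countable partition by coset-visitation pattern, and one must verify that each resulting piece is honestly Borel and $\Gamma$-invariant. Once that is in place, the rest is essentially a Borel analogue of arguments already carried out for the Baire case.
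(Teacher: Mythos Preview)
Your overall strategy matches the paper's: pass to a finite-index free subgroup, build a Borel end selection from continued fractions, apply Lemma~\ref{end_sel_path_decomp} to obtain arbitrarily long Borel path decompositions, invoke Lemma~\ref{realization_lemma} on the free part, and handle the countable non-free locus via cyclic stabilizers (the paper's Lemmas~\ref{comm_stab} and~\ref{wagon_realization_lemma}). Two points need correction.

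First, you fix $\Gamma \cong \F_2$, but Lemma~\ref{realization_lemma} requires exactly one generator $\gamma_i$ for each pair $(S_i,T_i)$ in a minimal good generating set $R$ of $E$; with only two generators you cannot realize systems with $|R| > 2$. The fix is easy: since $E$ is non-complementing, the group in Lemma~\ref{realization_lemma} is $\F_k$ with $k = |R|$, and $\PSL_2(\Z)$ contains a finite-index copy of $\F_k$ for every $k \geq 2$ (for instance because $\F_k$ already has finite index in $\F_2$). The paper passes directly to such an $\F_k$.

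Second, your end-transfer step needs adjustment. Tail equivalence of continued fractions corresponds to $\PGL_2(\Z)$-orbit equivalence, not $\PSL_2(\Z)$-orbit equivalence (Lemma~\ref{tail_equiv}); this is harmless since you are doing a finite-index transfer anyway. More substantively, partitioning $X$ by the ``eventual pattern of $\PSL_2(\Z)/\Gamma$-cosets visited by the Gauss iterates'' does not obviously yield a Borel selection of finitely many ends in each $\Gamma$-component: the Gauss iterates need not eventually remain in a single $\Gamma$-orbit, and the coset sequence is typically aperiodic, so it is unclear what invariant you are extracting. The paper instead uses Lemma~\ref{finite_index}, which transfers a Borel end selection across any finite-index inclusion of finitely generated groups by projecting each vertex of a given ray to its nearest point in the target component and erasing loops; this is the clean tool you want here, applied once to go from the Gauss-map graph (generating $\PGL_2(\Z)$) down to your $\F_k$.
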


For example, this action is $n$-divisible using Borel pieces for $n
\geq 3$.

\begin{open}
Characterize the abstract systems of congruences
which can be realized in the action of $\PSL_2(\Z)$ on $P^1(\R)$ by Borel pieces.
\end{open}

It is a theorem of Adams \cite[Lemma 3.21]{JKL} that if $G$ is a locally
finite graph on a standard probability space $(X,\mu)$ and $G$ is
$\mu$-hyperfinite, then $G$ admits a $\mu$-measurable selection of finitely
many ends. Using Adams's theorem, we also show that
any $\mu$-hyperfinite action of
$\F_2$ on a standard probability space $(X,\mu)$ has a $\mu$-measurable
realization of any abstract system of congruences $E$, if $E$ is
non-complementing and non-expanding. (See Theorem~\ref{mu_hyp_decomp}).

Our decomposition lemmas have some other applications in Borel
combinatorics. Simon Thomas has asked whether every locally
finite Borel graph has an unfriendly
Borel coloring, where an \define{unfriendly coloring} of a graph $G$ on $X$ is a function $f
\from X \to 2$ such that for every $x$,
\[\card{\{y \in N(x) \colon c(x) \neq c(y)\}} \geq \card{\{y \in N(x) \colon
c(x) = c(y)\}}.\]
Thomas's question is partially motivated by the open problem in classical
combinatorics of whether every countable graph admits an unfriendly
coloring. If $G$ is a graph on $X$, say that a function $f \from X \to 2$
is \define{strongly unfriendly} if for every $x$, $|\{y \in N(x) \colon
c(x) = c(y) \} \leq 1$.

We use our decomposition lemma to prove the following result:

\begin{thm}\label{unfriendly_coloring}
  Suppose $G$ is a locally finite acyclic Borel graph on a Polish space
  $X$ that admits a Borel path decomposition of length at least $5$. Then
  $G$ has a Borel strongly unfriendly coloring. Hence, if $G$ is a locally finite acyclic Borel
  graphs of degree at least $2$, then $G$ admits a Baire measurable
  strongly unfriendly colorings, and $G$ admits a $\mu$-measurable
  strongly unfriendly colorings for every Borel probability measure on $X$ rendering
  $G$ $\mu$-hyperfinite.
\end{thm}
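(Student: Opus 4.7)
The plan is to process the path decomposition $P_0, P_1, \ldots$ in order of $i$ and inductively extend a Borel partial coloring $c \from X \to \{0, 1\}$, together with an auxiliary Borel function $B \from \dom(c) \to \{0, 1\}$ that records, for each already coloured vertex $v$, whether $v$ is still allowed to acquire one more monochromatic incident edge ($B(v) = 1$) or has already exhausted its slack ($B(v) = 0$). The invariant to maintain through stage $i$ is that the partial coloring is strongly unfriendly on its domain, with $B(v) = 0$ precisely when $v$ has a monochromatic neighbor.

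Fix a Borel linear order on $X$. To process a path $p = v_0 v_1 \cdots v_\ell \in P_i$ with $\ell \geq 5$ and $v_0 < v_\ell$, note that end-ordering makes the interior vertices $v_1, \ldots, v_{\ell-1}$ fresh, while the leaves may be already coloured. The rule is as follows. First, insist the leaf edges $(v_0, v_1)$ and $(v_{\ell-1}, v_\ell)$ be non-monochromatic: if $v_0$ is already coloured set $c(v_1) := 1 - c(v_0)$, and otherwise defer $c(v_0)$ until the end and then set $c(v_0) := 1 - c(v_1)$ (symmetrically for $v_\ell$). Second, colour the remaining interior by alternation, extending from $c(v_1)$. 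Third, if both leaves are already coloured and this alternation gives $c(v_{\ell-1}) \neq 1 - c(v_\ell)$ (a parity mismatch), replace $c(v_3)$ by $c(v_2)$ and continue alternating thereafter; this ``parity flip'' creates a single monochromatic edge $(v_2, v_3)$ and swaps the values of $c(v_3), \ldots, c(v_{\ell-1})$, curing the mismatch. Finally, update $B$ by setting $B(u) := 0$ at each $u$ incident to a monochromatic edge newly created in this step.

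The hypothesis $\ell \geq 5$ is used exactly to guarantee that the potentially inserted edge $(v_2, v_3)$ is an interior edge of $p$ vertex-disjoint from both leaf edges, which in turn ensures that the rule produces at most one monochromatic edge in $p$ and that each vertex of $p$ acquires at most one new monochromatic incident edge. Pre-coloured leaves with $B = 0$ never acquire a new monochromatic edge because the leaf edges are always non-monochromatic. The only vertices whose $B$ drops to $0$ in this step are $v_2, v_3$ in the parity-flip case, and both are fresh, so the invariant propagates. Since each step is a Borel function of the locally available data, iterating through all $i$ yields a Borel strongly unfriendly coloring.

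The remaining assertions follow by combining this with the two path-decomposition lemmas established earlier. When $G$ is locally finite acyclic Borel of degree at least $2$, Lemma~\ref{path_decomp} produces an invariant comeager Borel set supporting a Borel path decomposition of length at least $5$; applying the above construction there and the standard Baire-category iteration to the meager complement yields a Baire measurable strongly unfriendly coloring. When $G$ is $\mu$-hyperfinite, Adams's theorem supplies a $\mu$-measurable selection of finitely many ends per connected component; restricting to a $\mu$-conull invariant Borel set on which this selection is Borel and invoking Lemma~\ref{end_sel_path_decomp} provides a Borel path decomposition of length at least $5$ there, after which the main construction produces a $\mu$-measurable strongly unfriendly coloring. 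I expect the principal obstacle to be verifying the correctness of the parity-flip step: specifically, that a single inserted monochromatic edge at $(v_2, v_3)$ simultaneously fixes the parity mismatch and avoids clashing with either (possibly already colored, budget-zero) leaf, which is exactly what the length-$5$ hypothesis guarantees.
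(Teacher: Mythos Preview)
Your argument is correct and essentially identical to the paper's: process the path decomposition stage by stage, force the two leaf edges of each path to be bichromatic, and alternate colours along the interior with at most one parity break (your explicit flip at $(v_2,v_3)$ is exactly the paper's ``possibly breaking parity once in the middle of the path,'' and your budget function $B$ is just bookkeeping for the same invariant). The paper records this as Lemma~\ref{path_unfriendly} (stated there with length $\geq 4$), and the deductions of the Baire measurable and $\mu$-measurable cases from Lemmas~\ref{path_decomp} and~\ref{end_sel_path_decomp} together with Adams's end-selection theorem are the same as yours.
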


In Section~\ref{comb_applications} we also discuss some further applications of
our decompositions, such as new proofs of Baire measurable and $\mu$-measurable
edge-coloring and matchings.

\subsection*{Acknowledgments}
The authors would like to thank Anton
Bernshteyn, Benson Farb, and Alekos Kechris for
helpful discussions about the material of the paper. The authors would also like
to thank the anonymous referee for their excellent suggestions and thorough reading of
the paper.

\section{Preliminaries}

Our notation for graph theory is standard, see \cite{D}. We recall a
few notions. By a graph on $X$ we mean a simple undirected graph with vertex set $X$.
The \define{degree} of a vertex is its number of neighbors. Two vertices
are \define{adjacent} if there is an edge between them. A vertex is a
\define{leaf} if it has degree $1$, and is a \define{splitting vertex} if
it has degree at least $3$. By a \define{path} we mean a simple path of
finite length $x_0, \ldots, x_n$. The endpoints of the path are $x_0$ and
$x_n$, and the remaining vertices are interior vertices of the path.
By a \define{ray}, we mean a simple infinite path
$(x_i)_{i \in \N}$.

If $G$ is a graph, we say a set of vertices is \define{independent} if it does not
contain two adjacent vertices. We say that a set $A$ is
\define{$k$-independent} if
for all distinct $x, y \in A$, we have $d(x,y) > k$.

Suppose $G$ is a graph on $X$. A \define{subgraph} $H$ of $G$ is a graph on
a subset of $X$ so that every edge in $H$ is an edge in $G$. If $Y \subset
X$, the \define{restriction of $G$ to $Y$} or \define{induced subgraph on
$Y$}, denoted $G \restriction Y$, is the graph on $Y$ where the edges of $G
\restriction Y$ are all edges in $G$ with vertices in $Y$.

A Borel graph is a graph on a Polish space $X$ whose edge relation is
Borel. For background on Borel graphs see \cite{KM}. An important example
of a Borel graph arises from Borel group actions. If $a$ is a Borel action of a
countable group $\Gamma$ on a Polish space $X$ and $S \subset \Gamma$ is a
symmetric set of group elements, then we let $G(a,S)$ be the graph on $X$
where $x, y$ are adjacent if there exists some $\gamma \in S$ such that
$\gamma \cdot x = y$.

If $G$ is a Borel
graph on $X$, the set of all paths of $G$ is a Borel subset of $\bigunion_n
X^n$, and hence a standard Borel space. Hence we may speak about a set of
paths in $G$ being Borel.

We note that in contrast to Lemmas~\ref{path_decomp} and
\ref{end_sel_path_decomp} there exist Borel graphs which do not admit Borel path
decompositions of length at least $3$.

\begin{thm}
  Suppose that $G$ is Borel graph of degree at least $3$ on a Polish space
  $X$ that admits an invariant measure $\mu$. Then $G$ does not admit a
  Borel path decomposition on any $\mu$-conull Borel set.
\end{thm}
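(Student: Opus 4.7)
The plan is a proof by contradiction via measure-theoretic double counting; following the surrounding discussion, I will treat the statement as concerning Borel path decompositions of length at least $3$. Suppose $G$ admits such a decomposition $P_0, P_1, \ldots$ on a $\mu$-conull Borel set, which after restriction may be taken to be all of $X$. For each vertex $x$, let $\iota(x) \in \{0, 1\}$ indicate whether $x$ is an interior vertex of some path in the decomposition---such a path is unique, because paths within each $P_i$ are vertex-disjoint and the end-ordered property prevents $x$ from being interior at two distinct levels---and let $\lambda(x)$ count the paths in which $x$ appears as a leaf. Since each edge at $x$ lies in a unique path, $d(x) = 2\iota(x) + \lambda(x)$, and integrating yields $\int d\,d\mu = 2 I + \Lambda$ with $I := \int \iota\,d\mu$ and $\Lambda := \int \lambda\,d\mu$.

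The heart of the argument is a mass transport inequality $\Lambda \leq I$. For each leaf $x$ of each decomposition path $p$, send one unit of mass from $x$ to its unique neighbor in $p$. Since $p$ has length at least $3$, this neighbor is an interior vertex of $p$, and moreover each interior vertex of any length-at-least-$3$ path is adjacent to at most one leaf of its path. Hence each vertex $y$ absorbs at most $\iota(y)$ units of mass, and the mass transport principle (i.e., $\mu$-invariance) gives $\Lambda \leq I$. Combined with the pointwise bound $d \geq 3$ giving $2 I + \Lambda \geq 3$, this forces $3 I \geq 3$, hence $I = 1$; saturation then yields $\iota \equiv 1$, $\lambda \equiv 1$, and $d \equiv 3$ $\mu$-almost everywhere.

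To finish, define $\phi(x)$ as the level of the unique path with $x$ interior and $\psi(x)$ as the level of the unique path with $x$ a leaf; the end-ordered property forces $\phi(x) < \psi(x)$ almost everywhere. Set $i_* := \min\{i : \text{the vertex set of paths in } P_i \text{ has positive } \mu\text{-measure}\}$; this exists by countable additivity since almost every vertex lies in some path. For $i < i_*$ the corresponding vertex set is null, so $\phi \geq i_*$ a.e. On the other hand, the set of leaves of paths in $P_{i_*}$ carries positive $\mu$-measure, because positive vertex-mass in a family of finite paths must include positive leaf-mass (each path contributes exactly two leaves). On this positive-measure set $\psi = i_*$, forcing $\phi < i_*$ and contradicting $\phi \geq i_*$ a.e. The main obstacle I anticipate is the mass transport step $\Lambda \leq I$: the length-at-least-$3$ hypothesis is essential there, since interior vertices of length-$2$ paths can receive mass from both of their path's endpoints, breaking the bound and blocking the clean identity $I = 1$ on which the finishing argument depends.
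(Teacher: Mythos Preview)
Your argument is largely correct and takes a genuinely different route from the paper's. The paper's proof is essentially one line: since every vertex has degree at least $3$, each $x$ must be an endpoint of some path $p(x)$ in the decomposition; setting $f(x)$ to be the vertex adjacent to $x$ in $p(x)$ produces a compression function, which contradicts the existence of an invariant measure via Nadkarni's theorem. Your mass-transport and double-counting approach is longer but more self-contained---it avoids citing Nadkarni, and it makes the role of the length-$\geq 3$ hypothesis completely explicit (it is exactly what gives $\Lambda \leq I$, as you correctly anticipate).

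There is, however, one real gap in your finishing step. You assert that positive $\mu$-mass on the vertex set $V_{i_*}$ of $P_{i_*}$ forces positive $\mu$-mass on its leaf set, citing only that ``each path contributes exactly two leaves.'' That parenthetical is not a proof: without a bound on path lengths the leaf-to-vertex ratio in $P_{i_*}$ can be arbitrarily small, and nothing you have written rules out the leaves forming a $\mu$-null set while the interiors carry all the mass. The repair is easy. One option is to first invoke Lemma~\ref{good_path_decomp} to assume all paths have bounded length. Another is a second mass transport in which each vertex $v$ lying in a $P_{i_*}$-path of length $\ell_v$ sends $1/(\ell_v+1)$ to each of that path's two endpoints, so every leaf receives exactly $1$, giving $\mu(L_{i_*}) = \int_{V_{i_*}} 2/(\ell_v+1)\,d\mu > 0$. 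Alternatively, having established $\iota \equiv \lambda \equiv 1$ a.e., you are one move from the paper's mechanism anyway: the map sending each $x$ to its neighbor in its unique leaf-path is a measure-preserving injection carrying $\{\psi = k\}$ into $\{\phi = k\}$, so $\mu(\psi = k) \leq \mu(\phi = k)$ for every $k$; summing gives equality, and then $\phi < \psi$ forces $\mu(\phi = k) = 0$ for all $k$ by induction on $k$.
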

\begin{proof}
  Let $P_0, P_1, \ldots$ be a Borel path decomposition. Note that every
  vertex $x$ must be the endpoint vertex of some unique path $p(x) \in P_i$ since $G$ has
  degree at least $3$. Define a Borel function $f \from X \to X$ where
  $f(x)$ is the vertex adjacent to $x$ in $p(x)$.
   Then $f$ is a compression function contradicting $\mu$ being measure
  preserving. \cite{N}
\end{proof}

We will use the following lemma giving a criterion for
the existence of abstract systems of congruences without any measurability
properties.

\begin{defn}\label{defn:good_generating}
  Suppose $E$ is an abstract system of congruences.
  Say that a relation $R$ on a set $X$ \define{generates} the
  equivalence relation $E$ on $X$ if the smallest abstract system of
  congruences containing $R$ is equal to $E$. Say that a generating set $R$
  for $E$ is \define{good} if $R$ contains all pairs $(U,V) \in E$ such
  that $U = \comp{V}$. Finally, a minimal good generating set of $E$ is a
  good generating set $R$ so that there is no proper subset of $R$ that is
  a good generating set for $E$.
\end{defn}

\begin{lemma}[See also {\cite[Section 4]{W}}]\label{wagon_realization_lemma}
  Suppose that $E$ is an abstract system of congruences on $n$, and $R =
  \{(S_1, T_1), \ldots,
  (S_k, T_k)\}$ is a minimal good generating set of $E$. Suppose $a \colon \Gamma \actson X$
  is an action of
  \[\Gamma = \langle \gamma_1 \ldots \gamma_k \mid
  \{\gamma_i^2 = 1 \colon T_i = \comp{S_i}\}\rangle. \]
  Suppose finally that for every $x \in X$, $\Stab(\{x\})$ is cyclic.
  Then there is an $a$-realization
  $\{A_0, \ldots, A_{n-1}\}$ of $E$, witnessed by
  \[\gamma_i \cdot \bigunion_{j \in S_i} A_j = \bigunion_{j \in
  T_i} A_j. \tag{*}\]
\end{lemma}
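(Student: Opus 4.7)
I would decompose the problem orbit-by-orbit: the congruence conditions~(*) are local to each $\Gamma$-orbit of $X$, so it suffices to define $\pi \from \Gamma \cdot x_0 \to \{0, \ldots, n-1\}$ on each orbit with the property that for every $x$ and every $i$, $\pi(x) \in S_i \iff \pi(\gamma_i \cdot x) \in T_i$. Then $A_j := \pi^{-1}(j)$ will be the desired partition. Fix an orbit with basepoint $x_0$, and let $H = \Stab(\{x_0\})$ (cyclic by hypothesis). Since the presentation of $\Gamma$ uses only the relations $\gamma_i^2 = 1$ for those $i$ with $T_i = \comp{S_i}$, $\Gamma$ is the free product of the cyclic groups $\langle \gamma_i \rangle$, and its Cayley graph $T$ with respect to the generators $\gamma_1, \ldots, \gamma_k$ is a tree. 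The orbit is naturally identified with the Schreier graph $T/H$, obtained by quotienting $T$ by the right action of $H$.

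If $H = \{e\}$, the Schreier graph is $T$ itself, and I define $\pi$ by tree propagation from $x_0$: pick $\pi(x_0)$ arbitrarily, and then for each edge $(y, \gamma_i \cdot y)$ of $T$ with $y$ already colored, set $\pi(\gamma_i \cdot y)$ to any element of $T_i$ if $\pi(y) \in S_i$, and any element of $\comp{T_i}$ otherwise. Both of these sets are nonempty (since $T_i, \comp{T_i} \in \PP(n)$), and the acyclicity of $T$ prevents conflicts.

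If $H$ is infinite cyclic, say $H = \langle \eta \rangle$, then right multiplication by $\eta$ is an infinite-order tree automorphism of $T$, so it translates along an axis $L$ with translation length $t$ equal to the cyclically reduced word length of $\eta$. The quotient $T/H$ has a single cycle of length $t$ (the image of $L$) with subtrees hanging off each cycle vertex. After coloring the cycle consistently with the edge biconditionals, I extend into each subtree by the tree-propagation argument above. The main obstacle is the cycle-coloring step: choosing $x_0$ so that $e \in L$ and writing $\eta = g_1 \cdots g_t$ cyclically reduced, I need $c_0, \ldots, c_{t-1} \in \{0, \ldots, n-1\}$ satisfying each edge biconditional with $c_t = c_0$.

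I plan to analyze the composed relation $\Phi_\eta \subset \{0, \ldots, n-1\}^2$ of pairs $(c, d)$ realized by some cycle-coloring from $c$ to $d$, and to show it always contains a diagonal entry. Each edge contributes the ``rectangular'' relation $(S_i \times T_i) \cup (\comp{S_i} \times \comp{T_i})$; the infinite order of $\eta$ precludes the unsatisfiable single-edge self-loop case $T_i = \comp{S_i}$, and I would argue by induction on $t$, together with the minimality of the good generating set, that no longer cycles are frustrated either. Finite cyclic stabilizers (such as $\Z/2$) would produce an unsatisfiable self-loop directly, so the ``cyclic'' hypothesis must implicitly exclude torsion stabilizers in $\Gamma$; establishing solvability of the cycle CSP in the infinite-order case is where I expect the delicate combinatorial work to lie.
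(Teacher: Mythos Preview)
Your outline matches the paper's: both reduce to single orbits, note that the Schreier graph is a tree with at most one cycle, propagate freely on the tree part, and handle the cycle via minimality of the good generating set $R$. For the cycle step the paper uses a dichotomy you have not yet isolated: if along the cycle some consecutive pair has $Y(g_i) \notin \{X(g_{i+1}), \comp{X(g_{i+1})}\}$ (indices taken mod the cycle length) then the labeling always closes up, with no appeal to minimality; otherwise one gets a chain $V(0) \E V(1) \E \cdots \E V(l)$ with each $V(i) \in \{X(g_i), \comp{X(g_i)}\}$, an uncolorable cycle forces $V(l) = \comp{V(0)}$, and a minimal-length subchain whose endpoints agree up to complement exhibits a redundant generator in $R$. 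Your worry about $\Z/2$ stabilizers is apt: the paper's applications only invoke the lemma with torsion-free cyclic stabilizers (Lemma~\ref{comm_stab} is proved precisely to rule out involutions), and neither argument covers the self-loop case.
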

\begin{proof}
  This Lemma is proved in {\cite[Section 4]{W}} when $E$ is
  non-complementing.

  Using the axiom of choice, it suffices to prove the lemma when the action
  has the single orbit.
  Since the stabilizer of every point is cyclic, the graph $G(a, \{\gamma_i : i \leq k\})$ has at most one cycle.

  Suppose there is a cycle $x_0, x_1, \ldots, x_l = x_0$. Let $g$ be
  the group element $g = g_{l-1} \ldots g_1 g_0$ so that $x_{i+1} = g_i \ldots g_1
  g_0 \cdot x_0$, and $g_i \in \{\gamma_1^{\pm}, \ldots,
  \gamma_k^{\pm}\}$. We claim we can assign elements of this cycle to $A_0, \ldots,
  A_{n-1}$ in a way that is consistent with (*). First, define functions
  $X$ and $Y$ on the generators by letting $X(\gamma_j) = S_j$,
  $Y(\gamma_j) = T_j$, $X(\gamma^{-1}) = T_j$, and $Y(\gamma^{-1}) = S_j$,
  so obeying (*) corresponds to having
  \[x_i \in \bigunion_{j \in X(g_i)} A_j \text{ iff } x_{i+1} \in
  \bigunion_{j \in Y(g_i)} A_j.\]
  Let $i^+$ denote $i + 1 \bmod k$.

  Case 1: Suppose there is some $i< l$ such that $X(g_{i^+}) \neq Y(g_{i})$
  and $X(g_{i^+}) \neq \comp{Y(g_{i})}$. Then we claim we can assign $x_0, \ldots, x_l$ to
  $A_0, \ldots, A_{n-1}$ in a way that satisfies (*). For example, suppose
  there is $r, s \in X(g_{i^+})$ such that $r \in Y(g_i)$ and $s
  \in \comp{Y(g_i)}$. Start by assigning $x_{i^{++}}$ to an
  arbitrary element of $Y(g_{i^+})$. Then proceed around the
  cycle, assigning elements in a way consistent with (*). Finish by
  assigning $x_{i^+}$ to $A_{r}$ if $x_i \in X(g_i)$, or assigning
  $x_{i^+}$ to $A_{s}$ if $x_i \notin X(g_i)$. The other cases are
  essentially identical.

  Case 2: Suppose for all $i< l$, $X(g_{i^+}) = Y(g_{i})$ or $X(g_{i^+}) =
  \comp{Y(g_{i})}$. In this case, we claim that if there is no way to
  assign $x_0, \ldots, x_l$ to
  $A_0, \ldots, A_{n-1}$ in a way that satisfies (*), then $R$ is not a
  minimal good generating set, which is a contradiction.

  Let $V(0) = X(g_0)$, and then inductively define $V(i+1) = Y(g_i)$ if
  $V(i) = X(g_i)$, and otherwise $V(i+1) = \comp{V(g_i)}$ if $V(i) =
  \comp{X(g_i)}$. Hence $V(0) \mathrel{E} V(1) \mathrel{E} V(2) \ldots
  \mathrel{E} V(l)$. Since there is no way to assign $x_0, \ldots, x_l$ to
  $A_0, \ldots, A_{n-1}$ in a way that satisfies (*), we must have that
  $V(0) = \comp{V(l)}$.
  Now take a minimal length subsequence $V(i), \ldots, V(j)$ of $V(0),
  \ldots, V(l)$ such that
  \[\text{$j - i \geq 2$ and $V(i) = V(j)$ or $V(i) = \comp{V(j)}$.} \tag{**}\]
  It is clear that if
  $g_i = \gamma_m$, then we can remove the pair
  $(S_m, T_m)$ from $R$ and we would still generate $E$. This is because if $V(i) = V(j)$,
  then $V(i) \E V(i+1)$ follows from $V(i+1) \E V(i + 2) \ldots \E V(j) =
  V(i)$. If $V(i) = \comp{V(j)}$, then the fact that $V(i) \E V(i+1)$
  follows from $V(i+1) \E V(i + 2) \ldots \E V(j) = \comp{V(i)}$, and since
  by the definition of a good generating set, the pair $V(j) \E
  \comp{V(i)}$ must appear in $R$.
  (Note that here we are using the minimal length of this subsequence among
  those with (**) and the fact that $g$ is a reduced word to ensure that the equivalences $V(i) \E V(i+1)$ and
  $V(i) \E \comp{V(i+1)}$ do not appear in the equivalences $V(i+1) \E \ldots
  \E V(j)$). This finishes Case 2.

  Now that we assigned the elements of the cycle to $A_0, \ldots, A_{n-1}$,
  if a cycle exists, for the remaining acyclic portion of the graph, we
  clearly iteratively assign
  the vertices to $A_0, \ldots, A_{n-1}$ in a way that satisfies (*).
\end{proof}

Throughout we will be working with actions of such groups $\Gamma$ that are
free products of copies of $\Z$ and $\Z/2\Z$, and where the generators of
$\Gamma$ of order $2$ will witnesses congruences of the form $U \mathrel{E} \comp{U}$.

\section{Baire measurable realizations}

In this section we prove Theorem~\ref{Baire_abstract}. We begin with a
decomposition lemma for acyclic locally finite Borel graphs (Lemma~\ref{path_decomp}). As an intermediate step towards
this lemma, we consider decompositions into certain types of trees that
themselves have suitable decompositions into paths. Recall that a
\define{tree} is a connected acyclic graph, a \define{leaf} of a tree is a
vertex of degree $1$, and a \define{splitting vertex} is a vertex of degree
at least $3$.
Say that a tree $T$ is \define{$n$-spindly} if there is at
most one leaf $l$ of $T$ so that for all distinct leaves $x, y$, if $l \notin
\{x,y\}$, then $d(x,y) > 2n$, and if $l \in \{x,y\}$, then $d(x,y) \geq n$.
The utility of spindly trees is the following lemma:

\begin{lemma}\label{spindly_lemma}
  Every finite $n$-spindly tree $T$ can be written as a union of
  edge-disjoint paths $p_0, p_1, \ldots$ each having length at least $n$,
  and which are end-ordered.
\end{lemma}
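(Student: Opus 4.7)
The plan is to perform a bottom-up recursion on $T$, rooted at $l$ (or at any leaf if no distinguished $l$ exists, in which case the spindly hypothesis becomes strictly stronger and the same proof goes through). If $T$ has no splitting vertex, then $T$ is a single path and the spindly hypothesis directly forces $|T| \geq n$, so taking $T$ itself as the single path $p_0$ settles this case; hence we may assume $T$ has a splitting vertex.

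At each vertex $v$ the recursion maintains a single ``pending'' path of length $L_v \geq 0$ running from some descendant up to $v$. At a leaf $v \neq l$ set $L_v = 0$. At a non-root splitting vertex $v$ with children $c_1, \ldots, c_k$ in the tree of splitting vertices and leaves, letting $w_i$ denote the length in $T$ of the unique path from $v$ to $c_i$, put $M_i = L_{c_i} + w_i$, choose $i^*$ minimizing $M_{i^*}$, set $L_v = M_{i^*}$, and emit each $M_j$ for $j \neq i^*$ as a finished path in the decomposition. At the root $l$, emit the unique pending $M_1$ as the final finished path. The key invariant, to be verified by induction, is that at each $v$ the pending of length $L_v$ originates at a non-$l$ leaf of $T$ in $v$'s subtree. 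Granting this, at any splitting vertex $v$ the origins of two different child pendings $M_i, M_j$ are non-$l$ leaves $u_i, u_j$ in distinct subtrees, so $d_T(u_i, u_j) = M_i + M_j > 2n$ by spindliness, forcing at most one $M_i \leq n$; the minimal $M_{i^*}$ is then exactly that short index (if one exists), so every emitted $M_j$ has length $> n$. At the root, the pending closes to a path of length $d_T(u, l) \geq n$, again from spindliness applied to the non-$l$ leaf $u$ where it originates.

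It remains to end-order the resulting paths. By construction each splitting vertex $v$ is an interior vertex of exactly one emitted path (the one obtained by eventually closing the chain of send-ups passing through $v$), while every other emitted path meeting $v$ has $v$ as an endpoint. Declaring $p \prec q$ whenever some $v$ is interior to $p$ and an endpoint of $q$, any linear extension of $\prec$ yields an end-ordering. To see $\prec$ is acyclic, a hypothetical cycle $p_1 \prec \cdots \prec p_m \prec p_1$ would produce distinct vertices $v_1, \ldots, v_m$ (distinct because each is the unique interior-vertex of some $p_i$) such that $p_i$ contains the unique $T$-path from $v_{i-1}$ to $v_i$; concatenating these $m$ paths yields a nontrivial closed walk in the tree $T$, forcing some edge to be traversed twice and hence two of the $p_i$'s to share an edge, contradicting edge-disjointness. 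I expect the main subtlety of the write-up to be verifying the origin-invariant carefully through the inductive step; once that is in place the spindly hypothesis and uniqueness of paths in a tree do essentially all of the work.
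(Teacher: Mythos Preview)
Your proof is correct, but it is organized differently from the paper's argument, and the two approaches can in fact produce different decompositions. The paper proceeds top-down: it takes $p_0$ to be a \emph{globally} shortest leaf-to-leaf path (with at least one endpoint $x \neq l$), and then shows that each subtree hanging off $p_0$ is itself $n$-spindly with the attachment vertex as its distinguished leaf. The key estimate is that minimality of $p_0$ forces $d(w,l_z) \geq d(x,l_z)$ for every leaf $w$ of the hanging subtree, which combined with the spindly bound $d(x,w) > 2n$ yields $d(w,l_z) \geq n$; the recursion then finishes immediately. Your argument instead roots at $l$ and proceeds bottom-up, making only \emph{local} greedy choices: at each splitting vertex you keep the shortest pending branch and emit the rest, using the pairwise inequality $M_i + M_j > 2n$ to guarantee that all non-minimal $M_j$ exceed $n$. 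This is a clean alternative, and your acyclicity argument for $\prec$ is fine (the key point, which you use implicitly, is that each splitting vertex is interior to exactly one emitted path, so the witnesses $v_i$ in a putative $\prec$-cycle are forced to be distinct). The paper's route has the advantage that the induction hypothesis is exactly the statement of the lemma, so no auxiliary invariant (``pending originates at a non-$l$ leaf'') needs to be tracked; your route has the advantage of being more explicitly algorithmic and of making the end-ordering relation transparent as the parent relation on a rooted tree of paths.
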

\begin{proof}
  We construct $p_0, p_1, \ldots, p_k$ by induction.
  Let $p_0$ be a path from one leaf to another leaf, having minimal length
  among such paths between leaves. Let the endpoints of $p_0$ be $x$ and $y$. We may
  assume $x \neq l$ for the distinguished leaf $l$ if it exists.

  For each
  vertex $z$ not in $p_0$, let $V_z$ be the set of $w$ such that there
  is a path $p$ from $z$ to $w$ such that no interior vertex of $p$
  is in $p_0$.
  Since $T$ is a tree, there is exactly one vertex in $T
  \restriction V_z$ which is contained in $p_0$. Let this vertex be $l_z$,
  which is a leaf in $T
  \restriction V_z$. For any leaf $w$ in $T \restriction V_z$, the
  distance $d(x,l_z) \leq d(w,l_z)$. Otherwise if $ d(w,l_z)< d(x,l_z)$, the path from $w$ to
  $y$ would have smaller length than $p_0$, but $p_0$ has minimal length.
  Hence, $d(w, l_z) \geq n$ since otherwise $d(x,l_z) \leq d(w,l_z) < n$
  which implies that $d(x,w) < 2n$ contradicting $T$ being $n$-spindly,
  since neither $x$ nor $w$ are equal to $l$. It follows that $T
  \restriction V_z$ is $n$-spindly witnessed by $l_z$.

  The lemma follows by inductively applying the lemma to all these
  $n$-spindly subgraphs of the form $T \restriction V_y$.
\end{proof}

\begin{remark}\label{n-spindly-decomp}
  Every locally finite $n$-spindly tree $T$ can be written as a union
  of edge-disjoint paths $p_0, p_1, \ldots$ that are of length at least $n$
  and which are end-ordered.
  That is,
  Lemma~\ref{spindly_lemma} remains true for infinite $n$-spindly trees.
  This is by an infinite iteration of the same process in the proof of
  Lemma~\ref{spindly_lemma} (or by a compactness
  argument).
\end{remark}

As an intermediate step towards our path decomposition, we prove a lemma
decomposing into $n$-spindly trees.

\begin{lemma}\label{spindly_decomp}
  Suppose $G$ is a locally finite acyclic graph on a Polish space $X$ of
  degree at least $2$, and
  $n \geq 1$. Then there are a $G$-invariant comeager Borel set $D$ and
  edge-disjoint Borel subgraphs $G_0, G_1, \ldots $ such that $\bigunion_i
  G_i = G \restriction D$, every connected component of $G_i$ is a finite
  $n$-spindly tree, and the sequence $G_0, G_1, \ldots$ is end-ordered.
\end{lemma}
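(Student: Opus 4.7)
The plan is to decompose $G \restriction D$ across countably many stages, constructing at each stage $k$ a Borel family $\mathcal{F}_k$ of pairwise vertex-disjoint finite $n$-spindly trees and setting $G_k := \bigcup \mathcal{F}_k$. Interior (non-leaf) vertices of trees chosen at stage $k$ will be permanently removed from consideration at later stages, which makes the end-ordering property automatic. The main work is to arrange that, on some sufficiently large comeager $G$-invariant Borel set $D$, every edge of $G \restriction D$ ends up in exactly one $G_k$.

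First I would find a comeager $G$-invariant Borel set $D_0 \subset X$ together with a Borel coloring $c \from D_0 \to \N$ whose color classes $C_i := c^{-1}(i)$ are $10n$-independent in $G$ --- that is, any two distinct elements of $C_i$ lie at $G$-distance greater than $10n$. Since $G$ is locally finite, the $10n$-th power graph $G^{10n}$ is also locally finite, so a standard Kechris--Solecki--Todorcevic-style Baire category argument supplies such a coloring on a comeager set. Pick a surjection $f \from \N \to \N$ each fiber of which is infinite, and fix a Borel linear order on $X$ for canonical tie-breaking. Inductively at stage $k$, let $V_k \subset D_0$ be the vertices not yet used as interior vertices of any tree in $\bigcup_{j < k} \mathcal{F}_j$, and let $E_k$ be the edges of $G \restriction V_k$ not already assigned to some $G_j$ with $j < k$. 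For each candidate center $v \in C_{f(k)} \cap V_k$, I would run a depth-$(2n+1)$ breadth-first exploration of the residual graph $(V_k, E_k)$ rooted at $v$ and use the Borel order to select in a Borel manner a finite $n$-spindly subtree $T_v$ containing $v$ (possibly just $\{v\}$ if the local structure prevents anything better). The $10n$-independence of $C_{f(k)}$ forces distinct $T_v$ to be vertex-disjoint, so $\mathcal{F}_k := \{T_v : v \in C_{f(k)} \cap V_k, T_v \neq \{v\}\}$ is a Borel family of finite $n$-spindly trees.

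The hard part will be verifying the coverage property: that a comeager $G$-invariant Borel $D \subset D_0$ can be chosen on which every edge of $G \restriction D$ lies in some $G_k$. Since $f$ hits each color class infinitely often, every vertex gets infinitely many chances to serve as a center, and I would argue by a Baire category / density argument that for each edge $e = \{x,y\}$ of $G$ the set of configurations in which $e$ is never absorbed is meager. The degree $\geq 2$ hypothesis intervenes here to ensure that every vertex lies in arbitrarily long paths in $G$, so the greedy $n$-spindly subtree construction always has enough local material to extend around $e$ and can be prevented from getting stuck except on a meager set. Intersecting $D_0$ with the resulting comeager set on which absorption succeeds yields the desired $D$; the end-ordering of $G_0, G_1, \ldots$ is then automatic from the definition of $V_k$.
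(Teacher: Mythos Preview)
Your proposal has a real gap in the coverage step, and the overall architecture differs from the paper's in a way that makes that gap hard to repair.

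A minor point first: end-ordering is not automatic from your bookkeeping. You define $V_k$ as the vertices not yet used as \emph{interior} vertices, so a vertex $x$ that was a \emph{leaf} of some tree in $\mathcal{F}_j$ with $j<k$ remains in $V_k$ and can become an interior vertex of some $T_v\in\mathcal{F}_k$, which violates end-ordering. This is easily fixed by tracking all previously used vertices and forbidding them from being interior later, but it is not automatic as you state.

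The substantive problem is coverage. Once the coloring $c$ is fixed your construction is fully deterministic, so there is no ``space of configurations'' over which to run a Baire category argument; a given edge of $G\restriction D_0$ is either eventually absorbed or it is not. And there is a concrete mechanism for failure: you select $T_v$ as an $n$-spindly subtree of a depth-$(2n+1)$ BFS tree, so an interior vertex $w$ of $T_v$ need not have all of its residual edges inside $T_v$ (indeed, the full BFS tree itself is typically \emph{not} $n$-spindly, since two leaves can branch just below a common parent). You then remove $w$ from $V_{k+1}$, so every edge at $w$ not in $T_v$ vanishes from the residual graph and is never covered. Revisiting the color class $c(w)$ infinitely often does not help, since $w$ is no longer a candidate center and no longer a vertex of the residual graph. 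Your appeal to ``degree $\ge 2$ gives long paths'' does not address this; the obstruction is not lack of local material but loss of edges at removed interior vertices.

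The paper's proof works by a different mechanism. Rather than fixed $10n$-separation, it takes centers $A_s$ that are $3d(s)$-separated with $d(s)=3n\cdot 6^s$ growing exponentially, chosen via \cite[Lemma~3.1]{MU16} so that $D=\bigcup_s A_s$ is comeager and $G$-invariant. The tree at stage $s$ around each $x\in A_s$ is not a one-shot BFS subtree but is built by iteratively attaching short paths to \emph{all} sufficiently nearby connected components of $H_{s-1}=\bigcup_{j<s}G_j$, stratified by the stage at which those components stabilized. Maintaining explicit diameter and separation invariants (the paper's conditions (2)--(4)) then forces any two adjacent vertices of $D$ into the same component of some $H_s$, and acyclicity gives the edge. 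There is no residual category argument at the end; the comeager set $D$ is fixed from the outset, and coverage is proved directly from the invariants. The growing scales and the explicit merging with earlier components are exactly what your fixed-radius scheme is missing.
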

\begin{proof}
  We give a construction in countably many steps. Let $d(i) = 3n6^{i}$. By
  \cite[Lemma 3.1]{MU16}, let $(A_i)_{i \in \N}$ be subsets of $X$ such
  that the elements of $A_i$ are pairwise of distance greater than $3
  d(i)$, and $D = \bigunion_i A_i$ is comeager and $G$-invariant. Before
  step $s$ we will have constructed edge-disjoint Borel subgraphs $G_0,
  G_1, \ldots, G_{s-1}$. Let $H_{i} = \bigunion_{j \leq i} G_j$. Let
  $H_{i,k}$ for $k \leq i$ be all the connected components $C$ in $H_i$ where $k$ is least
  such that $C$ is also a connected component of $H_k$. So $H_i$ is the
  disjoint union $H_i = \bigunion_{k \leq i} H_{i,k}$.

  Our induction hypotheses are as follows:

  \begin{enumerate}
  \item For every $i<s$ and $x \in A_i$, there is an edge incident to $x$ in
$H_i$.
  \item For every $k \leq s - 1$, the diameter of any connected component
  of $H_{s-1,k}$ is at most $d(k)$.

  \item For every $k \leq s - 1$, the distance between any two connected
  components of $H_{s-1,k}$ is at
  least $2 d(k)$.
  \item The distance between any two connected components of $H_{s-1}$
  is greater than $2n$.
  \end{enumerate}

  Note that these hypotheses imply that every edge in $G
  \restriction D$ will appear in some $G_i$. To see this suppose $x, x'$ are
  adjacent where $x \in A_s$ and $x' \in A_{s'}$. Then $x$ and $x'$
  must both be in connected components of $H_{\max(s,s')}$ by (1), and
  hence the same connected component by (4). Thus the edge $(x,x')$ must be
  in $H_{\max(s,s')}$ since $G$ is acyclic.

  Below we inductively define $G_s$, then prove each connected component of
  $G_s$ is $n$-spindly. Note that to satisfy part (1) of the induction
  hypothesis, we need to add an edge incident
  to each $x \in A_s$ to $G_s$ if there is not one already one in
  $H_{s-1}$. However, simply adding such edges by themselves may violate induction
  hypothesis (4). So we will need to inductively define $G_s$ to include
  paths to all nearby connected components of $H_{s-1,k}$ so they all become the
  same connected component in $H_s$. Hypotheses (2) and (3) give us control
  over this process so we can satisfy (4).

  To begin, let $G_{s,0}$ be the graph consisting of all vertices in
  $A_{s}$ (and no edges). Inductively, for $0 < i \leq s$, let $G_{s,i}
  \supset G_{s,i-1}$ be the union of $G_{s,i-1}$ with all paths of length
  at most $d(s-i)$ in the graph $G \setminus H_{s-1}$
  from vertices in $G_{s,i-1}$ to connected components of
  $H_{s-1,s-i}$.
  Since elements of $A_s$ have pairwise distance at least
  $3d(s)$ it is clear by induction that components of $G_{s,i}$ have
  diameter at most $2d(s-1) + \ldots + 2d(s - i)$, and hence components of
  $G_{s,s}$ have diameter at most $2d(s-1) + \ldots + 2 d(0)$.
  Similarly, the components of $G_{s,s}$ are pairwise of distance at least
  $3d(s) - 2d(s-1) - \ldots - 2 d(0)$.

  Let $A^0_s$ be the set of $x \in A_s$ that are not incident to any edge of
$H_{s-1}$ or $G_{s,s}$. (Hence, every $x \in A^0_s$ has $d(x,H_{s-1}) >
d(0) \geq 3n$). For each $x
\in A^0_s$, let $p(x)$ be the lex-least path of length $n$ in $G$ starting at
$x$. Let $A^1_s$ be the set of $x \in A_s$ that are leaves in $G_{s,s}$.
For $x \in A^1_s$, let
$p(x)$ be the lex-least path of length $n$ starting at $x$ in $G
\setminus (G_{s,s} \union H_{s-1})$. Such a path exists since every vertex
in $G$ has
degree at least $2$, and since if $y$ is a neighbor of $x$ that is not in
$G_{s,s}$, then there is no simple path of length at most $d(0) \geq 3n$ beginning $x,
y, \ldots$ that ends in an element of $H_{s-1}$ by the definition of
$G_{s,s}$.
Let $J_s = \{p(x) \colon x \in A^0_s \lor x \in A^1_s\}$ and let
$G_s = G_{s,s} \union J_s$. Clearly $H_s$ satisfies (1) by definition.

  Suppose $C$ is
  a connected component of $G_{s}$. We want to prove $C$ is $n$-spindly.
  Now $C$ contains a unique $x \in A_s$. We consider three cases. Case 1: if $x \in A^0_s$, then clearly $C$
  is just a path of length $n$, hence $C$ is $n$-spindly.
  Case 2: if $x \in A^1_s$,
  then let $p(x) = x, \ldots, z$ have endpoint $z$. In this case, $z$ is the
  distinguished leaf $C$; if $l$ is any other leaf of $C$, then $d(z,l)
  \geq n$ since $p(x)$ has length $n$.
  By the definition of $G_s$, any leaf in $C$ not equal to $z$ is the
  endpoint of a path from $G_{s,i-1}$ to $H_{s-1,s-i}$ for some $i$.
  Since any two connected components of
  $H_{s-1}$ have distance at least $2n$ by (4), all these leaves have
  distance pairwise greater than $2n$. So $C$ is $n$-spindly.
  Case 3: if $x \notin A^0_s$ and $x \notin A^1_s$, then
  all leaves of $C$ are endpoints of paths from $G_{s,i-1}$ to $H_{s-1}$ and have
  distance greater than $2n$, so $C$ is $n$-spindly.

  Now we verify parts (2) and (3) of the induction hypothesis. By construction
  of $G_s$, every connected component of $G_s$ has diameter at most $2d(s-1) +
  \ldots + 2d(0) + n \leq d(s) - 2d(s-1)$. Since connected components of
  $H_{s-1}$ have diameter at most $d(s-1)$ by our induction hypothesis,
  connected components of
  $H_{s,s}$ therefore have diameter at most $d(s)$. Similarly,
  the distance between any two connected
  components of $G_s$ is at least $3d(s) - 2d(s-1) - \ldots - 2d(0) - 2n
  \geq 2d(s) + 2d(s-1)$.
  Hence, connected components of $H_{s,s}$ have
  pairwise of distance at least $2d(s)$, since connected components of
  $H_{s-1}$ have diameter at most $d(s-1)$.
  Note that if $C$ is a connected component of $H_{s,k}$, then it is also a
  connected component of $H_{s',k}$ for all $s' <  s$. Hence, part (2) and
  (3) of the induction hypothesis are also true for all $k < s$.
  This verifies parts (2) and (3) of the induction hypothesis.


  Now we show that part (4) of the induction hypothesis holds. Suppose $C$
  is a connected component of $H_{s,s}$.
  We want to show that distance from $y \in C$ to any other connected component $C'$ of
  $H_{s,k}$
  is greater than $2n$ for $k \leq s$. When $k = s$, this follows from (3), so assume $k <
  s$.
  For a contradiction, let $y$ be a vertex in $C$ with $d(y,C') \leq 2n$.
  We may assume that $y \in G_{s}$, since if $y \in
  G_{s'}$ for $s' < s$, then $d(y,C')$ follows from our induction hypothesis.
  We may further assume $y \in G_{s,s}$. To see this, let $x \in C$ be
  the unique vertex in $C$ with $x \in A_s$. If $x \in A^0_s$, then $C =
  p(x)$, and $d(x,H_{s-1}) > 3n$, so $d(y,H_{s-1}) > 2n$ since $p(x)$ has
  length $n$. If $x \in A^1_s$, then any path of length at most $2n$ from
  $x \in p(z)$ to an element of $H_{s-1}$ must go through $y$ by our
  discussion after the definition of $p(x)$.

  So let $y \in G_{s,s}$ be so that $d(y,C') \leq 2n$. Let $y'$ be
  the closest element in $G_{s,s-k} \restriction C$ to $C'$. Hence by the
  the construction of $G_{s,s}$, we have
  $d(y,y') \leq d(k-1) + \ldots + d(0)$. Since $C'$ is distance at
  most $2n$ from $y$, $C'$ is distance at most $2n + d(k-1) + \ldots +
  d(0) < d(k)$ from $x'$. First suppose $x'$ is also a vertex in
  $G_{s,s-k-1}$. Then $x'$ would be an element of $G_{s,s-k-1}$ of distance $< d(k)$
  from an element of $H_{s,k}$, and so in the definition of $G_{s,s-k}$ there
  should have been a path added from $G_{s,s-k-1}$ to $C'$ in $G_{s,s-k}$.
  If $x'$ is not a vertex in $G_{s,s-k-1}$, then $x'$ must be part of a path added in
  $G_{s,s-k}$ from an element of
  $G_{s,s-k-1}$ to some connected component $C''$ of $H_{s-1,k}$. Since this path is of distance at most
  $d(k)$, this would imply that $C'$ and $C''$ are of distance $< 2d(k)$
  which contradicts part (3) of the induction hypothesis.
\end{proof}

We are now ready to prove our path decomposition lemma.

\begin{lemma}\label{path_decomp}
  Suppose $G$ is a locally finite acyclic graph on a Polish space and $n
  \geq 1$. Then
  there is a comeager Borel set $D$ such that $G \restriction D$ has a path
  decomposition of length at least $n$.
\end{lemma}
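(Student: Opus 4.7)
The plan is to combine Lemma~\ref{spindly_decomp} with Lemma~\ref{spindly_lemma} in a Borel and end-ordered fashion. Before applying Lemma~\ref{spindly_decomp}, I would first reduce to the case that $G$ has minimum degree at least $2$. Vertices of degree $0$ carry no edges and may be thrown into $D$ harmlessly. For the vertices of degree $1$, and more generally for vertices in connected components too small to contain a simple path of length at least $n$, I would pass to a comeager Borel invariant subset: note that the set of vertices whose component fails to contain an infinite ray is Borel, and by a mild modification of the argument in Lemma~\ref{spindly_decomp} (or by trimming short branches and incorporating the excised edges into the path starting at the retained vertex) one reduces to the hypothesis that every vertex has degree at least $2$.

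Next I would apply Lemma~\ref{spindly_decomp} with parameter $n$ to the resulting graph, producing a comeager $G$-invariant Borel set $D$ and edge-disjoint Borel subgraphs $G_0, G_1, \ldots$ with $\bigunion_i G_i = G \restriction D$, each connected component of each $G_i$ a finite $n$-spindly tree, and $(G_i)$ end-ordered. Then I would decompose each $G_i$ componentwise via Lemma~\ref{spindly_lemma}: each component $C$ admits an end-ordered decomposition into edge-disjoint paths of length at least $n$. Because the components of $G_i$ are finite with diameter uniformly bounded by $d(i)$, the construction in Lemma~\ref{spindly_lemma} (repeatedly extracting a minimum-length path between two non-distinguished leaves) can be carried out in a Borel fashion by everywhere selecting the lex-least such path. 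This produces, for each $i$, a Borel sequence $P_{i,0}, P_{i,1}, \ldots$ of Borel sets of vertex-disjoint paths of length at least $n$ whose union is the edge set of $G_i$ and which is end-ordered.

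Finally I would assemble the $P_{i,j}$ into a single sequence $(P_k)_k$ using the lexicographic order on $(i,j)$ so that every path from $G_i$ precedes every path from $G_{i+1}$; any reasonable bijection $\N \times \N \to \N$ respecting this works. Each $P_k$ is Borel, consists of vertex-disjoint paths of length at least $n$, and every edge of $G \restriction D$ appears in exactly one $P_k$. For end-orderedness, suppose $v$ lies in a path of $P_{(i,j)}$ and of $P_{(i',j')}$ with $(i,j)$ strictly preceding $(i',j')$. If $i = i'$, end-orderedness within $G_i$ from Lemma~\ref{spindly_lemma} applies. If $i < i'$, then $v \in G_i \cap G_{i'}$, so end-orderedness of the sequence $(G_i)$ forces $v$ to be a leaf of $G_{i'}$, hence to have degree $1$ in $G_{i'}$, hence to be an endpoint of the unique path of $P_{(i',j')}$ containing it.

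The main obstacle is the initial reduction to minimum degree at least $2$: Lemma~\ref{spindly_decomp} uses this hypothesis to extend each $x \in A^1_s$ by a fresh path of length $n$, and some genuine care is needed to route around vertices whose available degree has been exhausted by earlier stages without destroying the Baire-category budget. The subsequent steps are straightforward Borel bookkeeping, with Borel-ness of the componentwise decomposition following from the fact that each component is finite with bounded diameter.
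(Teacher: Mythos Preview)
Your proposal is correct and follows the same overall strategy as the paper: apply Lemma~\ref{spindly_decomp}, then Lemma~\ref{spindly_lemma} componentwise (choosing the lex-least decomposition to make it Borel), and finally splice the resulting paths into a single end-ordered $\N$-sequence. The only substantive difference is in the splicing step. You enumerate the $P_{i,j}$ lexicographically in $(i,j)$, using that the components of each $G_i$ have diameter bounded by $d(i)$ so that only finitely many $j$ occur for each fixed $i$; this is clean and works directly. The paper instead sets up a partial order $<$ on all the paths (recording which paths must precede which for end-orderedness) and repeatedly strips off the $<$-minimal elements. Both produce valid Borel path decompositions; your version is arguably simpler, while the paper's derivative process is a bit more flexible if one later wants to interleave paths from different $G_i$'s. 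You are also right to flag the reduction to minimum degree at least $2$: Lemma~\ref{spindly_decomp} is stated under that hypothesis, and the paper's proof of Lemma~\ref{path_decomp} invokes it without comment, so your extra paragraph is a genuine (if routine) addition rather than an oversight on your part.
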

\begin{proof}
  We prove this lemma by combining Lemma~\ref{spindly_decomp} and
  Lemma~\ref{spindly_lemma} with the obvious derivative process to obtain
  sets of paths.

  Suppose $(D_i)_{i \in \N}$ is such that each $S \in D_i$ is a finite
  sequence of paths in $G$ that are end-ordered, and $(D_i)_{i \in \N}$ is
  end-ordered. Let $<_{(D_i)}$ (suppressing the indexing for
  clarity) be the partial order on the
  paths appearing in the elements of the $D_i$ where $p <_{(D_i)} p'$ if
  $p, p'$ share some vertex, and either $p, p'$ both appear in some
  sequence $S \in D_i$ where $p$ appears before $p'$, or $p$ is in an
  element of $D_i$ and $p'$ is in an element of $D_j$ for $i < j$.

  We begin by applying Lemma~\ref{spindly_decomp} to obtain a $n$-spindly
  decomposition $G_0, G_1, \ldots$ of $G$ restricted to some comeager
  $G$-invariant Borel set. If $C$ is an $n$-spindly connected component of
  some $G_i$, let $P(C)$ be the lexicographically least decomposition
  $(p_0, \ldots, p_k)$ of $C$ satisfying the conclusion of
  Lemma~\ref{spindly_lemma}. Letting $D_{i,0} = \{P(C) \colon \text{$C$ is
  a connected component of $G_i$}\}$, we obtain a sequence $(D_{i,0})_{i
  \in \N}$ of sets of finite sequences of paths in $G$, and the associated
  partial order $<_{(D_{i,0})}$ defined in the previous paragraph.

  Inductively, for $j \geq 0$, let $P_j$ be the set of $p$ appearing in some element of
  $D_{i,j}$ such that there is no $p' <_{(D_{i,j})} p$. Then let
  $D_{i,j+1}$ be the set of all sequence in $D_{i,j}$ with all elements of
  $P_j$ removed. These $P_j$ are our desired set of paths. Every path $p'$
  in each $S \in D_i$ must eventually appear in some $P_j$ since there are
  only finitely many $p$ such that $p <_{(D_{i,0})} p'$.
\end{proof}

A useful observation is that if $G$ is a graph with a path decomposition,
the decomposition may be assumed to consist of paths of bounded length.
This follows the fact that the intersection graph on paths has a countable
Borel coloring, and a derivative operation analogous to that of
Lemma~\ref{path_decomp}.

\begin{lemma}\label{good_path_decomp}
Suppose $G$ is a locally finite Borel graph with a Borel path decomposition $P_0,
P_1, \ldots$ of length at least $n$. Then $G$ admits a Borel path
decomposition $P_0', P_1', \ldots$ of length at least $n$ such that
every path $p \in P_i'$ has length at most $2n$.
\end{lemma}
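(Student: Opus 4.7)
The plan is to produce $(P_i')$ by splitting each original path into consecutive sub-paths (``chunks'') of length between $n$ and $2n-1$, then re-indexing these chunks into a doubly-indexed family $(P_i^\varepsilon)_{i \in \N, \varepsilon \in \{0,1\}}$ according to the parity of the chunk's position within its parent path, and finally enumerating $P_0^0, P_0^1, P_1^0, P_1^1, \ldots$. Concretely, for a path $p \in P_i$ of length $l \geq n$, write $l = qn + r$ with $0 \leq r < n$; if $q = 1$ leave $p$ alone, and otherwise cut $p$ into $q-1$ initial chunks of length exactly $n$ followed by one terminal chunk of length $n + r$. Every chunk then has length in $[n, 2n-1]$, which gives both the lower bound of $n$ and the upper bound of $2n$ demanded by the lemma, and the chopping is Borel since it depends only on the length of $p$ and the position along $p$.

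Let $P_i^0$ (resp.\ $P_i^1$) consist of chunks whose position within their parent path is even (resp.\ odd); these are Borel subsets of the Borel space of paths in $G$. I would then verify three things: (a) each $P_i^\varepsilon$ is a set of vertex-disjoint paths; (b) the resulting sequence $P_0^0, P_0^1, P_1^0, P_1^1, \ldots$ is end-ordered; and (c) every edge of $G$ lies in exactly one chunk. Part (c) is immediate because the original $(P_i)$ is a path decomposition and chopping preserves edges. Part (a) uses the fact that paths in $P_i$ are pairwise vertex-disjoint, together with the observation that within a single parent path $p$ two chunks can share a vertex only if they are consecutive along $p$, in which case their positions differ by $1$ and they land in different parities.

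For (b), the main case is the comparison $P_i^0$ versus $P_i^1$: if a chunk $q \in P_i^0$ and a chunk $q' \in P_i^1$ share a vertex $x$, then (as just noted) $q$ and $q'$ are consecutive chunks of the same parent path, so $x$ is a chopping point and hence an endpoint (leaf) of both $q$ and $q'$, as required. For the cross-level comparison $P_i^\varepsilon$ versus $P_j^\delta$ with $i < j$, suppose $q \in P_i^\varepsilon$ is a chunk of $p \in P_i$ and $q' \in P_j^\delta$ is a chunk of $p' \in P_j$ share a vertex $x$. The original end-ordered hypothesis applied to $p$ and $p'$ forces $x$ to be a leaf of $p'$, i.e.\ the first or last vertex of $p'$, which is necessarily an endpoint of the first or last chunk of $p'$; since $x \in q'$, that chunk must be $q'$ itself, so $x$ is a leaf of $q'$.

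The only subtle point, and the step that guided the design of the whole argument, is precisely the sharing of chopping vertices between consecutive chunks of a single parent path: naively placing all chunks of a level $P_i$ into one set $P_i'$ would destroy vertex-disjointness, while placing each in its own set would break Borelness if parent paths have unbounded length. The parity split handles both issues simultaneously, turning the new decomposition into a sequence indexed by $\N \times \{0,1\}$ ordered lexicographically, and (b) and (c) then fall out cleanly from the preceding observations.
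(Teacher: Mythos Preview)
Your proof is correct and takes a genuinely different, more elementary route than the paper's. The paper also begins by chopping each path into sub-paths of length between $n$ and $2n$, but then handles the resulting failure of vertex-disjointness within each level by forming the intersection graph $H$ on all chunks (two chunks adjacent iff they share a vertex), invoking \cite[Proposition~4.10]{KST} to get a countable Borel coloring $c$ of $H$, and finally running a derivative process that iteratively strips off the chunks minimal under the lexicographic order on pairs $(i,c(p))$, exactly as in the last paragraph of the proof of Lemma~\ref{path_decomp}. Your parity split bypasses all of this: because chunks within a single parent path only meet their immediate neighbours, alternating them between two bins already restores vertex-disjointness, and the cross-level end-ordered check reduces cleanly to the original hypothesis since an endpoint of $p'$ lies in exactly one chunk of $p'$ and is a leaf there. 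The payoff is a shorter argument with an explicit bound of two new levels per old level, whereas the paper's approach is a reusable template (order by an auxiliary coloring, then peel off minimals) that it also deploys in Lemma~\ref{path_decomp}.
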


\begin{proof}
  Every path of length greater than $2n$ can clearly be written as a finite
  union of paths of length between $n$ and $2n$. Hence, we may replace any
  path $p \in P_i$ of length greater than $2n$ by the lex-least finite set
  of paths of length between $n$ and $2n$ whose union is $p$. This gives a
  sequence $P_0, P_1, \ldots$ having every property of being a Borel path
  decomposition with the exception that the $P_i$ may not consist of vertex
  disjoint paths (but with the property that every path in every $P_i$ has
  length at most $2n$).

  Let
  $H$ be the graph on the paths $\bigunion_i P_i$ where distinct $p,p' \in
  \bigunion_i P_i$ are adjacent in $H$ if they share some vertex. Then $H$
  is a locally finite Borel graph and hence has a countable Borel coloring
  $c \from \bigunion_i P_i \to \N$ by \cite[Proposition 4.10]{KST}.

  Inductively, let $D_{i,0} = P_i$. For a fixed $j$, we can order the paths
  in $\bigunion_i D_{i,j}$ by $p
  <_{(D_{i,j})} p'$ if $p \in D_{i,j}$ and $p' \in D_{i',j}$ where
  either $i < i'$, or $i = i'$ and $c(p) < c(p')$. Now a construction
  identical to the last paragraph of the proof of Lemma~\ref{path_decomp}
  gives our desired Borel path decomposition.
\end{proof}

\begin{lemma}\label{realization_lemma}
  Suppose that $E$ is an abstract system of congruences on $n$ which is
  non-expanding, and $R = \{(S_1, T_1), \ldots,
  (S_k, T_k)\}$ is a minimal good generating set of $E$. Suppose also $a$ is a free Borel action of
  the group
  \[\Gamma = \langle \gamma_1 \ldots \gamma_k \mid
  \{\gamma_i^2 = 1 \colon T_i = \comp{S_i}\}\rangle \]
  on a Polish space $X$. If
  $G(a,\{\gamma_1, \ldots, \gamma_k\})$ has a Borel path decomposition of
  length at least $r$ for sufficiently large $r$ (depending on $E$), then
  there is an $a$-realization of $E$ with Borel pieces witnessed by
  \[\gamma_i \cdot \bigunion_{j \in S_i} A_j = \bigunion_{j \in
  T_i} A_j.\tag{*}\]
  Furthermore, if the space $X$ is assumed to be
  perfect, then the sets $A_1, \ldots, A_k$ can be chosen so each is
  nonmeager.
\end{lemma}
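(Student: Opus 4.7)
The strategy is to inductively build a Borel labeling $c \from X \to \{0, 1, \ldots, n-1\}$, defining the partition $A_i = c^{-1}(i)$, by exploiting the given Borel path decomposition of the Schreier graph $G = G(a, \{\gamma_1, \ldots, \gamma_k\})$. Because $a$ is free and $\Gamma$ is a free product of copies of $\Z$ and $\Z/2\Z$, the graph $G$ is a forest. After invoking Lemma~\ref{good_path_decomp}, we may assume the decomposition $P_0, P_1, \ldots$ consists of paths of length between $r$ and $2r$, where $r = r(E)$ is a constant depending only on $E$ to be determined by the analysis below.

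The construction proceeds in stages. At stage $i$, a Borel partial labeling $c_i$ is defined on every vertex appearing in a path of $\bigunion_{j < i} P_j$ and satisfies the edge constraint $c_i(x) \in S_l \iff c_i(\gamma_l \cdot x) \in T_l$ on every labeled edge. For each path $p = x_0, \ldots, x_m \in P_i$, the end-ordered property guarantees the interior vertices $x_1, \ldots, x_{m-1}$ are previously unlabeled, so the plan is to extend $c_i$ to these. When at most one endpoint of $p$ carries a preassigned label, a lex-least greedy extension along $p$ always succeeds, since each edge constraint leaves the nonempty set $T_l$ or $\comp{T_l}$ available for the next vertex. The essential case is when both endpoints $x_0, x_m$ carry preassigned labels $j_0, j_m$, and one must verify that a consistent interior extension always exists.

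The main technical lemma is that for $m \geq r$, an interior extension exists for any $j_0, j_m \in n$. To prove it, I analyze the forward-reachability sets $B_i \subset n$: set $B_0 = \{j_0\}$, and let $B_{i+1} = T_l$ when $B_i \subset S_l$, $B_{i+1} = \comp{T_l}$ when $B_i \subset \comp{S_l}$, and $B_{i+1} = n$ otherwise, where $l$ indexes the generator of the edge from $x_i$ to $x_{i+1}$. Once $B_i = n$, the extension is trivial; otherwise the sequence generates a chain $V_0 \E W_0 \subset V_1 \E W_1 \subset \ldots$ in $\PP(n)$ with $W_i = B_{i+1}$ and $B_i \subset V_i$. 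Since $\PP(n)$ is finite, the chain must cycle, and the non-expanding hypothesis forces every intermediate inclusion in the cycle to be an equality. The minimality of the good generating set $R$ then forces each transition in the cycle to involve a complementing relation $U \E \comp U$, since otherwise the corresponding generator could be dropped from $R$ without losing the generation of $E$. This reduces all cycles to length $2$ between complementary sets, and the extension problem reduces to a parity constraint on the preassigned labels. I would handle this by maintaining, as an inductive invariant, that the partial labeling at each stage extends to a global consistent labeling; this keeps the preassigned labels compatible with the local extension problem. Establishing this invariant stage-by-stage is the main obstacle of the proof, and requires a careful Borel choice of interior labels respecting the implicit parity constraints imposed by potential cycles.

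For the nonmeager strengthening when $X$ is perfect, fix a countable basis $(U_k)_{k \in \N}$ of nonempty open subsets of $X$. At carefully-chosen stages of the construction, for each pair $(i, k) \in n \times \N$ I would locate a path from the decomposition lying within $U_k$, possible because $G$ is Borel, $X$ is perfect, and the decomposition is Borel, and tweak the labeling of its interior so that $A_i$ is comeager in some nonempty open subset of $U_k$. A standard Baire category argument then implies each $A_i$ is nonmeager.
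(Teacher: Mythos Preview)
Your overall architecture matches the paper's: use the path decomposition to label vertices stage by stage, the only issue being whether a path with both endpoints already labeled admits a consistent interior extension. However, your combinatorial analysis of this extension problem stops short, and the fallback you propose does not work.

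The gap is in the paragraph beginning ``The main technical lemma\ldots''. You correctly observe that the forward-reachability chain must cycle and that non-expansion forces the inclusions in the cycle to be equalities. But your next claim---that minimality of $R$ forces every transition in the cycle to be a complementing relation $U\mathrel{E}\comp{U}$---is not what minimality gives, and in any case you then declare a residual ``parity constraint'' and defer to an inductive invariant that the partial labeling extends to a global one. That invariant is not something you can maintain in a Borel fashion: to preserve it you would need Borel access to a global consistent labeling, which is precisely what you are trying to construct. So the argument is circular at that point.

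What you are missing is that there is \emph{no} residual parity obstruction. The paper defines a reduced word $g=g_l\cdots g_0$ to be $(k,m)$-\emph{bad} if no labeling with endpoints $m$ and $k$ exists, and proves there is a uniform bound on the length of bad words. The extra ingredient you did not use is that the path corresponds to a \emph{reduced} word in $\Gamma$, together with the fact that $\gamma_i^2=1$ exactly for the complementing generators. Once the cycle has all equalities, one takes a minimal-length subword with $V(i)=W(j)$ or $V(i)=\comp{W(j)}$; minimality of the subword and reducedness of $g$ then guarantee that the pair $(S_m,T_m)$ attached to $g_i$ does not recur among the later transitions, so it can be dropped from $R$ (using, in the complementary case, that a good generating set already contains the complementing pair). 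This contradicts minimality of $R$. Hence for $r$ exceeding the length of the longest bad word, \emph{every} pair of endpoint labels is extendable, and the greedy lex-least extension along each path in $P_i$ succeeds with no global bookkeeping.

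For the nonmeager clause, your plan of ``tweaking'' interior labels inside basic open sets is vague about why such paths can be found and why the tweak does not conflict with other constraints. The paper's argument is more direct: it arranges (by going back into the construction of the path decomposition, specifically the choice of the initial independent set in Lemma~\ref{spindly_decomp}) that the set of endpoints of paths in $P_0$ is already nonmeager. Since those endpoints may be labeled arbitrarily at stage $0$, one simply partitions that nonmeager set into $n$ nonmeager Borel pieces and assigns them to $A_0,\ldots,A_{n-1}$.
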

\begin{proof}
  Let $G$ be the graph $G = G(a,\{\gamma_1, \ldots, \gamma_k\})$.
  The idea of our proof is as follows. We first argue that there is a sufficiently
  large length $r$ so that given any path $p$ of length at least $r$ in $G$, if
  we have already assigned the endpoints of $p$ to be in elements of $A_0,
  \ldots, A_{n-1}$, then there is some way of consistently assigning the
  interior points of the path to elements of $A_0, \ldots, A_{n-1}$ so as
  to obey the congruences required in (*). Then we use a
  path decomposition of length at least $r$ for $G$ to inductively
  construct a realization of this
  system of congruences.

  Suppose that $g=g_l \ldots g_0$ is a reduced word in $\Gamma$, where $g_i
  \in \{\gamma_1^\pm, \ldots, \gamma_k^\pm\}$ are generators. If we begin at some $x \in
  X$, then such a reduced word of length $l+1$ gives a
  path of length $l+1$ in $G$: the path $x, g_0 \cdot x, \ldots, g_l \ldots
  g_0 \cdot x$.
  We give a definition concerning what elements of $A_0, \ldots, A_{n-1}$
  the elements of this path can belong to.
  Define functions $X$ and $Y$ on generators as follows:
  $X(\gamma_j) = S_j$, $Y(\gamma_j) = T_j$, $X(\gamma_j^{-1}) = T_j$, and $Y(\gamma_j^{-1}) = S_j$. Say that $n_0, \ldots, n_{l+1}$ is a
  \define{labeling of $g = g_l \ldots g_0$} if for all $i$, we have $n_i \in X(g_i)$ if and only if
  $n_{i+1} \in Y(g_i)$. So labelings correspond to acceptable assignments of
  the points $x, g_0 \cdot x, \ldots, g_l \cdots
  g_0 \cdot x$
  to the sets
  $A_0, \ldots, A_{n-1}$.

  We are interested in the ways labelings of $g$ may start and
  end.
  If $k,m \in n$, say a reduced word $g$ is \define{$(k,m)$-bad} if
  there is no labeling $n_0, \ldots, n_{l+1}$ of $g$ with $n_{0} = m$ and
  $n_{l+1} = k$.
  Say that $g$ is \define{bad} if there is some $k,m
  \in n$ such that $g$ is $(k,m)$-bad. We will use a pigeonhole principle
  argument to show there is a bound on the length of bad words.

  To begin, note that if $g = g_l \ldots g_0$ is bad, then $g_{l} \ldots g_{1}$ and $g_{l-1}
  \ldots g_0$ are also bad. That is, initial segments and final segments of
  bad words are bad.

  Suppose $g=g_l \ldots g_0$ is $(k,m)$-bad. Then exactly one of the following
  holds. Either
  \begin{enumerate}
  \item $m \in Y(g_l)$
  and $g$ is $(k,m')$-bad for every $m' \in Y(g_l)$, or
  \item $m \in \comp{Y(g_l)}$ and $g$ is $(k,m')$-bad for every $m' \in
  \comp{Y(g_l)}$.
  \end{enumerate}


  Fix a $(k,m)$-bad word $g$. Define a pair of associated sequences $V_{g,k}(i)$ and
  $W_{g,k}(i)$ where
  $(V_{g,k}(i),W_{g,k}(i)) = (\comp{X(g_i)},\comp{Y(g_i)})$ if $g_i \ldots
  g_0$ is $(k,m')$-bad for
  every $m' \in \comp{Y(g_i)}$ and $(V_{g,k}(i),W_{g,k}(i)) =
  (X(g_i),Y(g_i))$ otherwise. It is clear that there exist
  labelings $n_0, \ldots,
  n_{l+1}$ of $g$ where $n_i \in V_{g,k}(i)$ and
  $n_{i+1} \in W_{g,k}(i)$ for every $i$.
  Indeed, we have that $V_{g,k}(i) \E W_{g,k}(i)$ by definition, and $W_{g,k}(i) \subset
  V_{g,k}(i+1)$ for all $i \leq l$ or else $g$ is not a bad word.

  Suppose for a contradiction that there are infinitely many
  bad words. We break into two cases

  Case 1: suppose that there are arbitrarily long bad words $g$
  such that $g$ is $(k,m)$-bad for some $(k,m)$, and $W_{g,k}(i) = V_{g,k}(i+1)$ for all $i < l$. Hence $V_{g,k}(0) \E
  V_{g,k}(1) \E \ldots \E
  V_{g,k}(l)$. By the pigeonhole principle, and since initial
  segments and final segments of bad words are bad, we can find some
  bad
  word $g$ such that $g$ is $(k,m)$-bad, and 
  \[ g \text{ has length at least $2$ and } V_{g,k}(0) = W_{g,k}(l) \text{
  or } V_{g,k}(0) = \comp{W_{g,k}(l)} \tag{**}.\]
  We claim that this implies that either the word $g$ is not reduced, or
  the generating set of $E$ is not a minimal good generating set.

  First, we may assume that $g$ has minimal length among bad words
  with property (**), and so
  no proper subword of $g$ has property (**).

  If $V_{g,k}(0) = W_{g,k}(l)$, then the minimal length of $g$ among words with (**) implies that $g_0 \neq g_i^{\pm 1}$ for any $i > 0$. This implies that
  the generating set $R$ is not a minimal good generating set; the fact
  that $V_{g,k}(0) \E V_{g,m}(1)$
  follows from $V_{g,k}(1) \E W_{g,k}(1) = V_{g,k}(2) \E \ldots \E
  V_{g,k}(l) \E W_{g,k}(l)$ and $W_{g,k}(l) = V_{g,k}(0)$. In
  particular, removing the pair $(S_j,T_j)$ where $g_0 = \gamma_j$ would
  still generate $E$. Hence the generating set is not minimal.

  In the case that $V_{g,k}(0) = \comp{W_{g,k}(l)}$, we can also
  remove the pair $(S_j, T_j)$ where $g_0 = \gamma_j$, since there must be
  a generator witnessing $V_{g,k}(0) \E \comp{V_{g,k}(0)} = W_{g,k}(l)$ by our definition of a
  good generating set (see Definition~\ref{defn:good_generating}). In particular, a good generating set must contain
  every relation of the form $(S,\comp{S})$ where $S \E \comp{S}$.

  Case 2: suppose case $1$ does not hold. Then by the pigeonhole principle,
  and since initial segments and final segments of bad words are bad, we can
  find some $(k,m)$-bad word $g$ such that $V_{g,k}(0) = W_{g,k}(l+1)$, and
  $W_{g,k}(i) \subsetneq
  V_{g,k}(i+1)$ for some $i \leq l$. Then we can obtain a contradiction to the non-expansion of $E$
  by cyclically permuting the sequences to bring $V_{g,k}(i+1)$ to the $0$
  position and $W_{g,k}(i)$ to the $l$th position.

This finishes the proof that there are only finitely many bad words.

Now let $r$ be sufficiently large so that there are no bad words of length
$r$, and let $P_0, P_1, \ldots$ be a Borel path decomposition of $G$ of
length at least $r$. We may assume that this path decomposition satisfies
the conclusion of Lemma~\ref{good_path_decomp}. Now we inductively
construct a Borel $a$-realization $A_0, \ldots, A_{n-1}$ of $E$ in
countably many steps. After step $i$ we will have assigned each vertex
appearing in the paths in $P_j$ for $j \leq i$ to some $A_0, \ldots,
A_{n-1}$.

At step $i$ we will consider the paths $p \in P_i$. For each such path $p$, we
assign the vertices of $p$ to be the lex-least assignment to $A_0, \ldots,
A_{n-1}$ that is consistent with the requirement (*) in the statement of the
lemma. There is guaranteed to be such an assignment since we will have assigned
at most the start and end node of the path to $A_0, \ldots, A_{n-1}$ and since
the path has length at least $r$, the group element corresponding to it is not
bad.

At the end of this construction we will have assigned every element of $X$
to some $A_0, \ldots, A_{n-1}$. Since every edge in $G$ appears in some
path $p$, this ensures that the requirement (*) is satisfied at the end of
the construction.

To finish, we prove the ``furthermore'' statement at the end of the lemma.
Suppose that the space $X$ is perfect. We show that the sets $A_1, \ldots,
A_n$ can be chosen to be nonmeager. Notice that it suffices to have a path
decomposition where the first set $P_0$ of paths has a set of endpoints $D$
that is nonmeager. If this is then case, then we may may partition $D$ into
$k$ many nonmeager Borel sets since $X$ is perfect. Then we may assign
these $k$ sets to $A_1, \ldots, A_k$. This is because in our construction
above, the endpoints of the paths of $P_0$ may be assigned to $A_1, \ldots,
A_k$ arbitrarily.

  So we need to show that we can construct a path decomposition where the
  set of endpoints of paths in $P_0$ is nonmeager. To see this, observe
  that in our proof of Lemma~\ref{spindly_decomp} given the subsets
  $(A_i)_{i \in \N}$ of $X$ such that the elements of $A_i$ are pairwise of
  distance greater than $d(i)$, note that all the elements of the set $A_0$
  become endpoints of paths in $P_0$ in the final path decomposition.
  Hence, it suffices to show that $A_0$ can be chosen to be nonmeager in
  \cite[Lemma 3.1]{MU16}. To see this, note first that we can find a Borel
  nonmeager $k$-independent set. This is because $G^{\leq k}$ has a
  countable Borel coloring \cite[Proposition 4.10]{KST} and one of the
  color sets must therefore be a nonmeager $k$-independent Borel set $A_0$.
  Now apply \cite[Lemma 3.1]{MU16} to the graph $G \setminus A_0$ and the
  function $f(n) = d(n+1)$.
\end{proof}

We are now ready to prove Theorem~\ref{Baire_abstract}.
\begin{proof}[Proof of Theorem~\ref{Baire_abstract}.]
  We begin with the forward direction of Theorem~\ref{Baire_abstract}.
  Suppose $A_0, \ldots, A_{n-1}$ is a Baire measurable realization of an
  abstract system of congruence $E$ on $n$ where every $A_i$ is nonmeager.
  By Theorem~\ref{Wagon_abstract} it suffices to show that $E$ is non-expanding.
  For a contradiction, suppose there are sequences of sets $(V_i)_{i \leq k}$
and $(W_i)_{i \leq k}$ with $V_i, W_i \in \PP(m)$ such that $V_i \E W_i$ for
every $i \leq k$, $W_i \subset V_{i+1}$ for every $i < k$ and $V_0 \supsetneq
W_k$.  Let $A = \bigunion_{i \in V_0} A_{i}$ and $B = \bigunion_{i \in W_k}
A_i$. Let $\gamma$ be the product of the group elements witnessing $V_i \E W_i$
taken in increasing order for $i \leq k$. It follows that $\gamma \cdot A
\subseteq B$.  Clearly if $x \in A \setminus B$, then for all $n > 0$, $\gamma^n
\cdot x \notin A \setminus B$.

  Now there are two cases. First, if the rotation given by $\gamma$ is
  rational (i.e. periodic), this implies that $A \setminus B$ is not in any
  orbit of $\gamma$. This
  contradicts the fact that $A \setminus B$ is nonmeager.

  Second, suppose the rotation of $\gamma$ is aperiodic. Then $A \setminus
  B$ meets each orbit of $\gamma$ in at most one point which contradicts $A
  \setminus B$ being nonmeager as follows. If $A \setminus B$ was nonmeager,
  there would be an open set $U$ in which $A \setminus B$ is comeager. But
  since $\gamma$ is an irrational rotation, we can find some $n > 0$
  rendering $\gamma^n$ arbitrarily close to the identity, and hence some $n$
  for which $\gamma^n U \inters U \neq \emptyset$. Since $\gamma$ is a
  homeomorphism, this implies that both $A \setminus B$ and $\gamma^n \cdot
  (A \setminus B)$ are comeager in $\gamma^n U \inters U$. But then there
  is some $x$ so that $x \in A \setminus B$ and $\gamma^n \cdot x \in A
  \setminus B$ which is a contradiction. This finishes the proof of the
  forward implications.

  To prove the reverse implication, suppose that $E$ is non-complementing and non-expanding. Choose some $R =
  \{(S_1 T_1), \ldots, (S_k T_k)\}$ which minimally generates $E$, and let
  $\langle \gamma_1 \ldots \gamma_k \rangle$ be rotations of the $2$-sphere which
  generate a copy of $\F_k$.

  Now let $r$ be sufficiently large (so as to satisfy the hypothesis of
  Lemma~\ref{realization_lemma}). By Lemma~\ref{path_decomp}, we can find a
  comeager $G$-invariant Borel set $D$ so that there is a Borel path decomposition of length at least $r$ of
  $G \restriction D$. Let $a'$ be the restriction of the action of
  $\<\gamma_1, \ldots, \gamma_k\>$ to $D$.
  Then by Lemma~\ref{path_decomp} we can find a Borel
  $a'$-realization $A_0',  \ldots, A_{n-1}'$ of $E$. By
  the ``furthermore'' clause of Lemma~\ref{realization_lemma}, we can assume each of $A_0', \ldots,
  A_{n-1}'$ to be nonmeager.

  By Lemma~\ref{wagon_realization_lemma}, there is
  some realization $A_0'', \ldots, A_{n-1}''$  of $E$ on the $2$-sphere witnessed using (*). To finish
  our proof, replace $A_i''$ with $A_i'$ on $D$ to obtain a Baire measurable
  realization of $E$ on the $2$-sphere. That is, set $A_i = (A_i'' \inters
  \comp{D}) \union (A_i' \inters D)$.
\end{proof}

\section{Borel path decompositions from Borel end selections}

Suppose $f \from X \to X$. Say that $f$ is \define{aperiodic} if for all $x
\in X$ and $n \geq 1$, we have $f^n(x) \neq x$. Let $G_f$ be the graph
induced by $f$ where distinct $x_0, x_1 \in X$ are $G_f$-adjacent if
$f(x_0) = x_1$ or $f(x_1) = x_0$. Suppose $A \subset X$. Say that $A$ is
\define{forward recurrent} (with respect to $f$) if for every $x \in X$
there exists some $n \geq 0$ such that $f^n(x) \in A$.

We have the following lemma showing that bounded-to-one Borel functions
admit forward recurrent $r$-independent sets. Recall that a function $f
\from X \to Y$ is
\define{bounded-to-one} if there is some $k > 0$ such that for every $y \in
Y$, $|f^{-1}(y)| \leq k$.

\begin{lemma}\label{forward_rec_indep}
  Suppose $X$ is a standard Borel space and $f \from X \to X$
  is an aperiodic bounded-to-one Borel function. Then for every $r \geq 1$ there exists a
  Borel set $A \subset X$ that is forward recurrent and
  $r$-independent.
\end{lemma}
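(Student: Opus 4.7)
The plan is to produce $A$ by passing to a Borel proper coloring of an appropriate power graph and then selecting, in each connected component, the color that recurs infinitely often along the forward orbits.

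First I would verify that $G_f$ is a bounded-degree Borel forest. The degree bound (at most $K+1$, where $K$ bounds the fibers of $f$) is immediate. For acyclicity, consider a cycle $x_0,\ldots,x_{m-1},x_0$; orienting each edge by $f$ makes each vertex the tail of at most one edge of the cycle, so by edge counting the orientation is consistent around the cycle, yielding $f^m(x_0)=x_0$ and contradicting aperiodicity. A crucial consequence is that the forward orbits of any two vertices in the same connected component of $G_f$ eventually coincide, since the unique path between them ascends to a common ancestor $z = f^k(x)=f^l(y)$, after which $f^{k+n}(x) = f^{l+n}(y)$ for all $n \geq 0$.

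Next, form the Borel graph $G_f^{\leq r}$ on $X$ whose edges connect distinct vertices at $G_f$-distance at most $r$; this is still bounded-degree (by at most $(K+1)^r$), so by \cite[Proposition 4.6]{KST} it admits a Borel proper coloring $c \colon X \to \{0,\ldots,M-1\}$ for some finite $M$. For each $x$, the sequence $c(x),c(f(x)),c(f^2(x)),\ldots$ lives in the finite set $\{0,\ldots,M-1\}$, so by pigeonhole some color recurs infinitely often, and I define
\[ m(x) = \min\bigl\{i : c(f^n(x)) = i \text{ for infinitely many } n \geq 0\bigr\}. \]
The predicate inside is $\Pi^0_2$, so $m$ is Borel; moreover, by the merging-of-orbits observation, $m$ is constant on each connected component of $G_f$.

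Finally set $A = \{x : c(x) = m(x)\}$, which is Borel. For forward recurrence, $m$ is constant along the forward orbit of $x$ and its value $m(x)$ is attained infinitely often on that orbit, so any such $n$ witnesses $f^n(x) \in A$. For $r$-independence, if distinct $x, y \in A$ satisfied $d_{G_f}(x,y) \leq r$ they would share a connected component, forcing $m(x)=m(y)$ and hence $c(x)=c(y)$, contradicting that $c$ properly colors $G_f^{\leq r}$. The only non-routine ingredient is the acyclicity observation for $G_f$; beyond that, the two desired properties fall out cleanly from properness of $c$ and from pigeonhole, respectively.
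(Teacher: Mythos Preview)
Your proof is correct and follows essentially the same route as the paper's: color $G_f^{\leq r}$ with finitely many colors via \cite{KST}, and take $A$ to be the set of points whose color equals the least color appearing infinitely often along their forward orbit. The paper states but does not fully justify that this minimum is constant on $G_f$-components; your acyclicity/orbit-merging argument fills in exactly that step (and in fact the merging already follows edge-by-edge without invoking acyclicity, since adjacent vertices have forward orbits that agree after one step).
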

\begin{proof}
  Let $G_f^{\leq r}$ be the graph on $X$ where distinct $x, y \in X$ are
  $G_f^{\leq r}$-adjacent if $d(x,y) \leq r$. Since $G_f$ has bounded
  degree, $G_f^{\leq r}$ also has bounded degree. Hence, by \cite[Theorem
  4.6]{KST}, there is a Borel coloring $c$ of $G_f^{\leq r}$ with finitely
  many colors. Let $A$ be
  the set of $x \in X$ such that $c(x)$ is equal to the least number
  appearing infinitely often in the sequence $c(x), c(f(x)), c(f^2(x))
  \ldots$.
  Then for
  each $x$, all the elements of $A$ in the ($G$-)connected component of $x$ have
  the same color, and hence $A$ is $r$-independent, since $c$ is a coloring
  of $G_f^{\leq r}$. $A$ is forward recurrent by construction.
\end{proof}

Now we show that we can obtain Borel path decompositions from Borel end selections

\begin{lemma}\label{end_sel_path_decomp}
Suppose $G$ is an acyclic bounded degree Borel graph on $X$ such that there is
a Borel selection of finitely many ends in every connected component of
$G$. Then for every $n > 0$, $G$ admits a Borel path decomposition of
length at least $n$.
\end{lemma}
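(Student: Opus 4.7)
The plan is to imitate the construction of Lemma~\ref{path_decomp}, but to replace the comeager covering family $(A_i)_{i \in \N}$ coming from \cite[Lemma 3.1]{MU16} with genuinely Borel covering sets produced from Lemma~\ref{forward_rec_indep}. The end selection hypothesis is precisely what lets us convert the graph data into the aperiodic bounded-to-one Borel function required as input by Lemma~\ref{forward_rec_indep}.

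I would begin by partitioning $X$ into countably many $G$-invariant Borel sets on which the number $k$ of selected ends per component is constant, and handle each value of $k$ in turn. In the clean case $k = 1$, each component has a unique selected end $e$, and the function $f \from X \to X$ sending each $x$ to the second vertex of $r_0(x)$ is Borel, bounded-to-one (since $f(x) \in N(x)$ and $G$ has bounded degree), and aperiodic (each iterate moves one step closer to $e$ in the tree). Moreover, every edge of $G$ takes the form $\{x, f(x)\}$ for $x$ the endpoint farther from $e$, so $G_f = G$, and Lemma~\ref{forward_rec_indep} then yields, for any $r$, a Borel forward-recurrent set that is $r$-independent with respect to the actual graph metric on $G$.

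Using these, I would assemble Borel $d(i)$-independent sets $A_0, A_1, \ldots$ at the scales $d(i) = 3n \cdot 6^i$ of Lemma~\ref{spindly_decomp}, whose union is all of $X$ thanks to forward recurrence. Plugging these into the inductive construction of Lemma~\ref{spindly_decomp} in place of the comeager family yields a Borel decomposition of $G$ into edge-disjoint, end-ordered $n$-spindly subtrees, and applying Lemma~\ref{spindly_lemma} to each subtree together with the derivative process in the proof of Lemma~\ref{path_decomp} then delivers the required Borel path decomposition of length at least $n$.

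The main obstacle is the cases $k \geq 2$, where the naive $f(x) := $ second vertex of $r_0(x)$ need not be aperiodic: on the ``core'' of each component (the subtree spanned by the $k$ selected rays) the labelings $r_i(\cdot)$ can permute between adjacent vertices, creating $2$-cycles of $f$. For $k \geq 3$ the core contains only finitely many splitting vertices per component, and these are Borel-selectable and can serve as canonical roots from which an aperiodic $f$ is easily defined. The delicate case is $k = 2$, where the core is a disjoint union of bi-infinite lines with no such distinguished points; here I would exploit the full Borel data of both $r_0$ and $r_1$ to identify the Borel set of ``flip'' edges and patch $f$ into an aperiodic, bounded-to-one Borel function whose induced graph still coincides with $G$, after which the $k = 1$ argument applies verbatim. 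Verifying that this patched $f$ has the correct properties, and in particular that the resulting $r$-independence is measured in the genuine $G$-metric rather than just in $G_f$, is what I expect to be the most technical part.
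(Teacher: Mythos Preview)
Your plan has a genuine gap in what you call the ``clean case'' $k=1$. You claim that forward recurrence lets you assemble Borel sets $A_i$ that are $3d(i)$-independent with $\bigcup_i A_i = X$, but in general no such family can exist. Take for instance $X$ to be the non-eventually-periodic points of $2^{\N}$, $f$ the one-sided shift, and $\mu$ the Bernoulli measure; then $G_f$ is a $3$-regular acyclic Borel graph with one Borel-selected end per component and $\mu$ is $f$-invariant. If $A$ is $r$-independent then $A, f^{-1}(A), \ldots, f^{-r}(A)$ are pairwise disjoint (two distinct points of $A$ appearing on the same forward orbit would be at distance at most $r$), so $\mu(A) \leq 1/(r+1)$. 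Hence $\mu\bigl(\bigcup_i A_i\bigr) \leq \sum_i 1/(3d(i)+1) < 1$ for $d(i) = 3n \cdot 6^i$, and the union is not all of $X$. Forward recurrence of $A_i$ only says that every forward $f$-orbit meets $A_i$; it gives no reason for a given vertex to lie in some $A_i$. Without $\bigcup_i A_i = X$, the induction of Lemma~\ref{spindly_decomp} only decomposes $G \restriction \bigcup_i A_i$, so feeding forward-recurrent sets into that lemma cannot yield a decomposition of all of $G$.

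The paper avoids this entirely by \emph{not} rerunning Lemma~\ref{spindly_decomp}. It applies Lemma~\ref{forward_rec_indep} once to obtain a single forward recurrent $2n$-independent set $A$, lays down one length-$n$ path from each $a \in A$, and checks directly that every complementary region $[x]$ is a (possibly infinite) $n$-spindly tree whose distinguished leaf is the forward-most vertex of $[x]$---which exists and is unique precisely by forward recurrence. Remark~\ref{n-spindly-decomp} together with compact uniformization then produces the path decomposition in one stroke. For $k \geq 2$ the paper also takes a shorter route than you sketch: it invokes \cite[Theorem~C]{HM} to reduce Borel-uniformly to $k \in \{1,2\}$, and in the two-ended case it simply cuts the bi-infinite geodesic between the two selected ends into long paths and observes that the subtrees hanging off it are again $n$-spindly, rather than attempting to patch $f$ into an aperiodic function.
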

\begin{proof}
  We are given a bounded degree acyclic Borel graph $G$ on a standard Borel space
  $X$ where every vertex has degree at least $2$.
  First, by \cite[Theorem C]{HM} which builds on methods from \cite{Mi}, if there is a Borel function selecting finitely many
  ends from every connected component of $G$, then there is a Borel
  function selecting one or two ends in every connected component of $G$.
  Hence, we can partition $X$ into
  two $G$-invariant Borel sets $C_1, C_2$ so that
  $G \restriction C_1$ has a Borel selection of one end in each
  connected component, and $G \restriction C_2$ has a Borel selection of
  two ends in each connected component.

  Let $r(x)$ be the Borel function
  selecting one end in each connected component of $G \restriction C_1$. We
  may assume that $r(x)$ begins with the vertex $x$ (by either appending
  the path from $x$ to the start of the ray $r(x)$ if $x$ is not included
  in the ray, or deleting the vertices preceding $x$ if $x$ is included in
  the ray). Let
  $f(x)$ be vertex after $x$ in $r(x)$.
  Then it is easy to see that $f \from C_1 \to C_1$ generates the
  graph $G$.

  Let $B_2 \subset C_2$ be the Borel set of vertices
  vertices lying on the geodesic between the two ends chosen in $C_2$.
  Precisely, let $r_0(x)$, $r_1(x)$ be the functions selecting two ends in each
  connected component of $G \restriction C_2$. We may similarly assume that
  $r_0(x)$ and $r_1(x)$ begin with the vertex $x$, and let $f_0(x)$ be the
  vertex after $x$ in $r_0(x)$, and $f_1(x)$ be the vertex after $x$ in
  $r_1(x)$. Then $B_2 = \{x \in C_2 \colon f_0(x)
  \neq f_1(x)\}$. It is easy to see that
  every connected component of $G \restriction B_2$ is $2$-regular and
  every connected component of $G \restriction C_2$ contains exactly one
  connected component of $G \restriction B_2$.

  By Lemma~\ref{forward_rec_indep}, we can find a forward recurrent Borel set $A \subset
  C_1$ such that $A$ is $2n$-independent in $G$.
  Let $P_{0}^1$ be the set of lex-least paths of
  length $n$ which begin at some vertex of $A$, and let $B_1$ be the set of
  vertices contained in some element of $P_0^1$. If $x \in C_1 \setminus
  B_1$, let $[x]$ be the set of vertices $y$ for which there is a path $p$ from
  $x$ to $y$ for which no interior vertex of $p$ is in $B_1$. The forward
  recurrence of $A$ implies that for every $x \in C_1$, there is a unique
  forward-most element of $[x]$ under $f$. It is also clear that $G
  \restriction [x]$ satisfies the hypothesis of
  Remark~\ref{n-spindly-decomp}. For each $x$, the space of $n$-spindly
  decompositions is a compact space in the natural topology on all such
  decompositions.
  Hence, by compact uniformization \cite[Theorem 5.7.1]{Sr}, see also
  \cite[Theorem 18.18]{K}, there is a Borel way of selecting
  a unique a path decomposition of length at least $n$ for $G
  \restriction[x]$ for each $x \in C_1 \setminus B_1$. Hence, we can extend
  $P_0^1$ to a Borel path decomposition of length at least $n$ for $G
  \restriction C_1$.

  On $G \restriction C_2$, we can first partition $G \restriction
  B_2$ into a Borel set $P_{0}^2$ of finite paths of length at least $n$.
  if $x \in C_2 \setminus B_2$, let $[x]$ be the set of $y \in X$ such that
  there is a path $p$ from $x$ to $y$ for which no interior vertex of $p$ is
  in $B_2$. Once again, $G \restriction [x]$ is $n$-spindly.
  Hence by Remark~\ref{n-spindly-decomp} we can extend $P_0^2$ to a Borel path
  decomposition of length at least $n$ for $G \restriction C_2$.
\end{proof}

Using Adams end selection, we can use this lemma to show
that $\mu$-hyperfinite free actions of $\F_n$ have $\mu$-measurable
realizations of abstract systems of congruences that are non-complementing
and non-expanding.

\begin{thm}\label{mu_hyp_decomp}
  Suppose that $n \geq 2$, and $a$ is a free Borel action of $\F_n$ on a
  standard probability space $(X,\mu)$ that is $\mu$-hyperfinite. Then
  there is a $\mu$-measurable $a$-realization of every abstract system of
  congruences $E$ that is non-complementing and non-expanding.
\end{thm}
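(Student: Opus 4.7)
The plan is to reduce the statement to Lemma~\ref{realization_lemma} by producing, on a $\mu$-conull Borel set, a Borel path decomposition of the relevant Schreier graph, using Adams's end-selection theorem together with Lemma~\ref{end_sel_path_decomp}.

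Since $E$ is non-complementing, a minimal good generating set $R = \{(S_1, T_1), \ldots, (S_k, T_k)\}$ of $E$ contains no pair with $T_i = \comp{S_i}$, so the group $\Gamma$ appearing in Lemma~\ref{realization_lemma} is just $\F_k$. Because $n \geq 2$, the group $\F_n$ contains a copy of $\F_k$; fix elements $\gamma_1, \ldots, \gamma_k \in \F_n$ that freely generate such a copy. The restriction of $a$ to $\<\gamma_1, \ldots, \gamma_k\>$ is a free Borel action of $\F_k$, and the Schreier graph $G = G(a, \{\gamma_1, \ldots, \gamma_k\})$ is acyclic of bounded degree $2k$, each connected component being isomorphic to the Cayley graph of $\F_k$. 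The orbit equivalence relation of the restricted $\F_k$-action is a Borel subequivalence relation of the $\F_n$-orbit equivalence relation, hence $\mu$-hyperfinite; in particular $G$ is $\mu$-hyperfinite.

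By Adams's theorem \cite[Lemma 3.21]{JKL}, $G$ admits a $\mu$-measurable selection of finitely many ends in each connected component, which we may take to be Borel after restricting to some $\mu$-conull $G$-invariant Borel set $Y \subset X$. Applying Lemma~\ref{end_sel_path_decomp} to $G \restriction Y$ yields, for any prescribed $r$, a Borel path decomposition of $G \restriction Y$ of length at least $r$. Taking $r$ sufficiently large and invoking Lemma~\ref{realization_lemma} for the restricted action produces a Borel realization $A'_0, \ldots, A'_{m-1}$ of $E$ on $Y$. On the $\mu$-null set $X \setminus Y$, apply Lemma~\ref{wagon_realization_lemma} (whose hypotheses are met because the restricted $\F_k$-action is free, so all stabilizers are trivial and hence cyclic) to obtain a realization of $E$ there; combining it with the Borel realization on $Y$ yields a $\mu$-measurable $a$-realization of $E$ on all of $X$.

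The main obstacle is verifying that $G$ is $\mu$-hyperfinite, which reduces to the standard fact that $\mu$-hyperfiniteness is inherited by Borel subequivalence relations (apply it to the inclusion of the $\F_k$-orbit relation into the $\F_n$-orbit relation). Once this point is established, the argument is an essentially mechanical assembly of Adams's end selection, Lemma~\ref{end_sel_path_decomp}, Lemma~\ref{realization_lemma}, and Lemma~\ref{wagon_realization_lemma}.
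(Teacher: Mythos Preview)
Your argument is correct and follows essentially the same route as the paper: pass to a free subgroup $\F_k \leq \F_n$ with $k = |R|$, invoke Adams's end-selection on a conull set, then apply Lemma~\ref{end_sel_path_decomp} followed by Lemma~\ref{realization_lemma}. You supply several details the paper leaves implicit---notably the use of non-complementing to ensure $\Gamma = \F_k$, the inheritance of $\mu$-hyperfiniteness by the $\F_k$-subrelation, and the extension across the null complement via Lemma~\ref{wagon_realization_lemma}---but none of this diverges from the intended strategy.
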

\begin{proof}
  Let $R$ minimally generate $E$. Pass to a free subgroup $\F_k \leq \F_n$
  where $k = |R|$. Let $S$ be the set of generators of $S$.
  By a theorem of Adams \cite[Lemma 3.21]{JKL}, on a conull set there is a
  Borel function selecting either one or two ends from each connected
  component of $G(a,S)$. Hence, the theorem follows from 
  Lemmas~\ref{end_sel_path_decomp} and \ref{realization_lemma}.
\end{proof}

When we apply Lemma~\ref{end_sel_path_decomp}, it will be useful to know
that end selections pass between finite index subgroups.

\begin{lemma}\label{finite_index}
  Suppose $a$ is a free Borel action of a finitely generated group $\Gamma$ on $X$. Let $\Delta \leq
  \Gamma$ be a finitely generated finite index subgroup of $\Gamma$, and $b$ be the restriction
  of the action of $a$ to $\Delta$. Then if $S \subset \Gamma$ and $R
  \subset \Delta$ are finite symmetric generating sets, then $G(a,S)$ has a
  Borel selection of finitely many ends if and only if $G(b,R)$ has a Borel
  selection of finitely many ends.
\end{lemma}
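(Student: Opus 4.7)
The proof uses the classical geometric group theory fact that a finitely generated group $\Gamma$ and a finitely generated finite-index subgroup $\Delta$ are quasi-isometric, so that their Cayley graphs have canonically bijective ends. Since $a$ and $b$ are free, each $\Gamma$-orbit of $a$ is graph-isomorphic to the Cayley graph of $(\Gamma, S)$ and each $\Delta$-orbit inside is graph-isomorphic to the Cayley graph of $(\Delta, R)$; the inclusion of one into the other is a quasi-isometric embedding, so it induces a bijection between the ends of a $\Gamma$-orbit and the ends of any $\Delta$-orbit it contains. The task is to make this correspondence Borel in both directions.

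First I would fix a finite transversal $T = \{t_1 = 1, t_2, \ldots, t_m\}$ for the right cosets $\Delta \backslash \Gamma$ and set $D = \max_i |t_i|_S$. Freeness of $a$ yields the disjoint decomposition $[x]_\Gamma = \bigsqcup_{i=1}^m \Delta \cdot (t_i \cdot x)$ and a Borel projection $\pi_x \from [x]_\Gamma \to [x]_\Delta$ defined by $\delta t_i \cdot x \mapsto \delta \cdot x$, which moves each point by at most $D$ in $G(a,S)$-distance. Since $\Delta = \<R\>$, there is a constant $C$ such that $\pi_x$ carries $G(a,S)$-adjacent points to vertices at $G(b,R)$-distance at most $C$.

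For the forward direction, given Borel rays $r_0, \ldots, r_{k-1}$ witnessing an end selection for $G(a,S)$ on some $\Gamma$-invariant Borel set, I would define $\tilde r_j(x)$ by applying $\pi_x$ componentwise to $r_j(x) = (x_0, x_1, \ldots)$, inserting the lex-least $G(b,R)$-geodesic between each consecutive pair of projected points, and greedily removing loops to get a simple ray. These are Borel and represent the $k$ images under the end-bijection induced by $\pi_x$; consistency within each $\Delta$-orbit is inherited from consistency of the $r_j$'s on the enveloping $\Gamma$-orbit. For the reverse direction, given a Borel end selection for $G(b,R)$ on a $\Delta$-invariant Borel set $A$ with rays $r_0, \ldots, r_{k-1}$, for each $x \in A$ and $(i, j) \in \{1, \ldots, m\} \times \{0, \ldots, k-1\}$ form a candidate $G(a,S)$-ray $\tilde r_{i,j}(x)$ starting at $x$ by translating $r_j(t_i \cdot x)$ into $G(a,S)$-steps (each $R$-generator expanded to a fixed bounded word in $S$) and prepending the short $G(a,S)$-path from $x$ to $t_i \cdot x$. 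Let $n(x) \leq km$ be the number of distinct end-equivalence classes among these $km$ candidate rays in $[x]_\Gamma$; this is a Borel $\Gamma$-invariant function on $\Gamma \cdot A$, and on each of the finitely many $\Gamma$-invariant Borel pieces $\{n = n_0\}$ I would select the lex-least collection of $n_0$ end-inequivalent candidates.

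The main obstacle is administrative rather than mathematical: in the reverse direction the given partition witnessing the end selection for $G(b,R)$ is only $\Delta$-invariant, and a single $\Gamma$-orbit can meet several pieces with distinct values of $k$, so the target partition for $G(a,S)$ must be produced by a $\Gamma$-invariant refinement. This is handled by indexing by the multiset of labels $(\text{piece index}, \text{number of distinct ends selected})$ occurring across the $m$ $\Delta$-orbits comprising each $\Gamma$-orbit; there are only countably many such multisets, yielding a countable $\Gamma$-invariant Borel partition to which the construction above is applied piecewise.
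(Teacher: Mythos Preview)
Your proposal is correct and follows essentially the same approach as the paper: both exploit that the inclusion of a $\Delta$-orbit into its enveloping $\Gamma$-orbit is a quasi-isometry (bounded-distance retraction), so that projecting a ray to the ``nearest point'' in a chosen $\Delta$-orbit and erasing loops gives the desired Borel correspondence between ends. The paper's proof is considerably terser---it uses a nearest-point map in place of your explicit coset-transversal projection and dismisses the reverse direction as ``similar''---whereas you spell out the administrative issue of promoting a $\Delta$-invariant partition to a $\Gamma$-invariant one, which the paper leaves implicit.
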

\begin{proof}
  Since $\Delta$ is finite index in $\Gamma$, each $G(a,S)$ connected
  component contains
  finitely many components of $G(b,R)$, and each connected component of
  $G(b,R)$ is bounded distance from every point in the connected component
  of $G(a,S)$ it is contained in. Hence, there is an effectively defined
  bijection between ends in a connected component of $G(a,S)$, and ends in
  each $G(b,R)$-component that it contains.

  More precisely, suppose $r = (x_i)_{i \in \N}$ is a ray representing an
  end in $G(a,S)$, and $C$ is a connected
  component of $G(b,R)$. We define a ray $f_C(r)$ in $G(b,R)
  \restriction C$ as follows. To each $x_i$ we associate the nearest point $y_i$
  in $C$, and let $f_C(r)$ be the lex least ray passing through all the
  points $(y_i)_{i \in \N}$, erasing loops. The map $f_C$ clearly lifts to a map sending a selection of
  finitely many ends in $G(a,S)$ to a selection of finitely many ends in
  $G(b,R)$. The reverse implication is similar.
\end{proof}

\section{Constructive realizations of non-expanding abstract systems of
congruences for $\PSL_2(\Z)$ acting on $\P^1(R)$}

The group $\PSL_2(\Z)$ acts on the space $\P^1(\R)$ of lines in $\R^2$
through the origin. By identifying such a line with the $x$-value $x \in \R
\union \{\infty\}$ of its intersection point with the line $y = 1$, it is
easy to see that this action is isomorphic to the action of $\PSL_2(\Z)$ on
$\R \union \{\infty\}$ by fractional linear transformations, where
$\begin{bmatrix} a & b \\ c & d \end{bmatrix}$ acts via $x \mapsto \frac{ax
+ b}{cx + d}$.

It is a standard fact (see \cite[VII.1]{Se}) that $\PSL_2(\Z)$ is generated
by the two transformations $\alpha(x) = x + 1$ and $\beta(x) = -1/x$, and moreover that it factors as the free product of $\langle \beta \rangle$ of order $2$ and $\langle \alpha\beta\rangle$ of order $3$.

The group $\PGL_2(\Z)$ is index 2 over $\PSL_2(\Z)$, and similarly is
generated by $\alpha(x) = x + 1$ and $\gamma(x) = 1/x$. Note that $\beta(x) =
\alpha^{-1}(\gamma(\alpha(\beta(\alpha^{-1}(x))))) = (-1 + 1/(1 + 1/(x -
1))) = -1/x$.

Let $\Irr$ denote the set of irrational numbers. Each $x \in \Irr$ has
a unique continued fraction expansion
\[x = a_0 + \frac{1}{a_1 + \frac{1}{a_2 + \frac{1}{a_3 + \cdots}}}\]
Where $a_0 \in \Z$ and $a_1, a_2, \ldots \in \Z^+$ are positive integers.
We note the continued fraction expansion of $x$ as $(a_0; a_1, \ldots)$.
The following lemma is standard.

\begin{lemma}\label{tail_equiv}
Let $f \from \Irr \to \Irr$ be the function given by
\[f(x) = \begin{cases} x - 1 & \text{ if $x > 0$} \\
1/x & \text{ if $x \in (0,1)$}\\
x + 1 & \text{ if $x < 0$}
\end{cases}\]
Then $f$ generates the orbit equivalence relation of $\PGL_2(\Z)$ on
$\Irr$, and so $x, y \in \Irr$ are in the same orbit if and only if their
continued fraction expansions are tail equivalent.
\end{lemma}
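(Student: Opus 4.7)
The plan is to establish a cycle of three inclusions that together yield both conclusions of the lemma: $f$-orbit equivalence $\subseteq$ $\PGL_2(\Z)$-orbit equivalence $\subseteq$ tail equivalence of continued fractions $\subseteq$ $f$-orbit equivalence. The first inclusion is immediate, since on each piece of the piecewise definition of $f$ the map agrees with a single element of $\PGL_2(\Z)$: with $\alpha^{-1}$ on $x > 1$, with $\gamma$ on $(0,1)$, and with $\alpha$ on $x < 0$. Hence every $f$-step (and every $f^{-1}$-step) is realized by an element of $\PGL_2(\Z)$.

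For the second inclusion I would verify that each generator of $\PGL_2(\Z)$ preserves tail equivalence. The translation $\alpha(x) = x+1$ merely replaces the integer part $a_0$ of $x = (a_0; a_1, a_2, \ldots)$ with $a_0 + 1$, leaving the tail fixed. The inversion $\gamma(x) = 1/x$ I would handle by cases: if $a_0 \geq 1$ then $1/x = (0; a_0, a_1, \ldots)$; if $a_0 = 0$ (so $x \in (0,1)$) then $1/x = (a_1; a_2, \ldots)$; and the cases $x < 0$ reduce to the positive ones, since $-x$ has a continued fraction differing from that of $x$ only in finitely many initial entries. In every case only finitely many coefficients are disturbed, so tail equivalence is preserved.

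For the third and most substantive inclusion, I would show that iterating $f$ realizes the continued fraction shift, so tail equivalence is witnessed by reaching a common $f$-iterate. Given $x = (a_0; a_1, a_2, \ldots)$ with $a_0 \geq 1$, applying the $(x-1)$-branch exactly $a_0$ times produces $(0; a_1, a_2, \ldots) \in (0,1)$, after which one application of the $(1/x)$-branch produces $(a_1; a_2, \ldots)$. The case $a_0 = 0$ is a single $\gamma$-step, and the case $a_0 \leq -1$ reduces to the previous ones after finitely many $(x+1)$-steps push $x$ into $(0,1)$. Iterating this, any two irrationals with eventually equal continued fractions share a common $f$-iterate and are therefore $f$-orbit equivalent.

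The main obstacle is the continued fraction bookkeeping across the branch boundaries of $f$, especially on the negative branch and at the overlap in the piecewise definition; one must verify that the shift identity composes cleanly across successive iterations and that the convention for continued fractions of negative irrationals matches the positive case up to a bounded correction. Once the single-shift identity is established for positive irrationals, the negative case follows by the same argument modulo a finite preamble of $(x+1)$-steps, and the cycle of inclusions closes, yielding both claims of the lemma simultaneously.
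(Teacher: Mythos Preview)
Your proposal is correct and follows the same three-inclusion cycle as the paper's own proof: $f$-equivalence $\subseteq$ $\PGL_2(\Z)$-equivalence $\subseteq$ tail equivalence $\subseteq$ $f$-equivalence, with the third inclusion established by showing that iterates of $f$ implement the continued-fraction shift. The only minor variation is in the case $\gamma(x)=1/x$ for $x<0$: you reduce via the negation identity for continued fractions, whereas the paper writes down the explicit identity $\frac{1}{a+1/(b+C)} = -1 + \frac{1}{1 + \frac{1}{(-a-2)+\frac{1}{1+\frac{1}{(b-1)+C}}}}$ for $a\leq -2$; both amount to the same finite bookkeeping.
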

\begin{proof}
  The equivalence relation generated by $f$ is clearly contained in the orbit
  equivalence relation of $\PGL_2(\Z)$, since $f$ is defined piecewise by fractional linear
  transformations.

  Recall that two continued fraction expansions $(a_0; a_1,
\ldots)$ and $(b_0; b_1, \ldots)$ are tail equivalent if
there exists some $n,m > 0$ such that $a_{n + i} = b_{m + i}$ for all $i
\geq 0$. Since
\[f\left(a_0 + \frac{1}{a_1 + \frac{1}{a_2 + \frac{1}{a_3 + \cdots}}} \right)
= \begin{cases}
(a_0 - 1) + \frac{1}{a_1 + \frac{1}{a_2 + \frac{1}{a_3 + \cdots}}} & \text{ if $a_0 > 0$}\\
a_1 + \frac{1}{a_2 + \frac{1}{a_3 + \cdots}} & \text{ if $a_0 = 0$}\\
(a_0 + 1) + \frac{1}{a_1 + \frac{1}{a_2 + \frac{1}{a_3 + \cdots}}} & \text{ if $a_0 < 0$}
\end{cases}
\]
it is clear that if $x$ and $y$ are tail equivalent, then there are in the
same equivalence class of the equivalence relation generated by $f$.

To finish, since $\alpha(x) = x+1$ and $\gamma(x) = 1/x$ generate
$\PGL_2(\Z)$, it suffices to show that if $x \in \Irr$, then $x$, $x+1$, and
$1/x$ are tail equivalent. It is trivial to see that $x$ and $x + 1$ are
tail equivalent. That $x$ and $1/x$ are tail equivalent is clear when $x >
0$. When $x < 0$, since one of $x$ and $1/x$ are less than $-1$, by
swapping $x$ and $1/x$, we may
assume the continued fraction expansion of $x$ is $x = a + \frac{1}{b +
C}$, where $a \leq -2$, and $b \geq 1$. Then apply the following identity:
\[\frac{1}{a + \frac{1}{b + C}} = -1 + \frac{1}{1 + \frac{1}{(-a-2) +
\frac{1}{1 + \frac{1}{(b-1) + C}}}}.\]
Note that $-a-2 \geq 0$ and $b - 1 \geq 0$. If either if these two terms
are equal to zero, this just removes the corresponding term in the continued fraction
expansion, since $\frac{1}{0 + \frac{1}{a_n + C}} = a_n + C$.
\end{proof}

\begin{cor}\label{end_sel_psl2}
  Let $a$ be the restriction of the action of $\PSL_2(\Z)$ to the
  irrationals. Let $S = \{\alpha, \beta\}$ be the set of generators $\alpha(x) = x+1$ and
  $\beta(x) = -1/x$. 
  Then there is a Borel
  selection of one end in each equivalence class of $G(a,S)$.
\end{cor}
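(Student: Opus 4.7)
The plan is to combine continued fraction dynamics with Lemma~\ref{finite_index} to transfer a Borel one-end selection from the $\PGL_2(\Z)$ graph to the $\PSL_2(\Z)$ graph. Let $a'$ denote the restriction of $\PGL_2(\Z)$ to $\Irr$ with generators $S' = \{\alpha, \gamma\}$ where $\gamma(x) = 1/x$, so that the function $f$ of Lemma~\ref{tail_equiv} sends each $x$ to a $G(a', S')$-neighbor.

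The set of quadratic irrationals $Q \subset \Irr$ is countable, Borel, and both $\PGL_2(\Z)$- and $\PSL_2(\Z)$-invariant (fractional linear transformations with integer coefficients preserve algebraicity of degree at most $2$ over $\Q$). On $N = \Irr \setminus Q$, for each $x \in N$ the sequence $r(x) = (x, f(x), f^2(x), \ldots)$ is a simple infinite path in $G(a', S')$: if $f^i(x) = f^j(x)$ with $i < j$, then $f^i(x)$ is a periodic point of $f$, hence a quadratic irrational by Lagrange's theorem, and since $f^i(x)$ shares a tail continued fraction with $x$, this would contradict $x \in N$. For two $x, y \in N$ in the same $\PGL_2(\Z)$-orbit, Lemma~\ref{tail_equiv} says their continued fractions are tail-equivalent; since $f$-iteration consumes one digit at a time, $r(x)$ and $r(y)$ eventually coincide at the common tail and represent the same end. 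The $\PGL_2(\Z)$-action on $N$ is free because every non-trivial element of $\PGL_2(\Z)$ has only quadratic fixed points in $\Irr$, so Lemma~\ref{finite_index} applies with $\Gamma = \PGL_2(\Z)$, $\Delta = \PSL_2(\Z)$, $S = S'$, and $R = \{\alpha, \beta\}$; the map $f_C$ in its proof transfers the selected ray in each $G(a', S') \restriction N$-component to a single ray in each $G(a, S) \restriction N$-subcomponent.

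On $Q$, I work directly: fix a Borel enumeration $q_0, q_1, \ldots$ of $Q$, and for $y \in Q$ set $n(y) = \min\{n : q_n \in \PSL_2(\Z) \cdot y\}$, which is Borel since the $\PSL_2(\Z)$-orbit equivalence is Borel. Assign to $y$ the end represented by the ray $(q_{n(y)}, q_{n(y)}+1, q_{n(y)}+2, \ldots)$, which is a simple infinite path since $\alpha$ has infinite order. The ray depends only on the $\PSL_2(\Z)$-orbit of $y$, so this gives a consistent Borel end selection on each quadratic orbit. Combining the selections on $N$ and $Q$ yields the desired Borel selection of one end per $G(a, S)$-component. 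The main obstacle is handling the quadratic case, where $f$-iteration falls into a finite cycle (by Lagrange) rather than producing a ray; this is resolved using the countability of $Q$ and an explicit Borel enumeration, which bypasses the failure of aperiodicity on this exceptional countable set.
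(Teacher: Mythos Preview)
Your proof is correct and follows the same approach as the paper's: use the map $f$ of Lemma~\ref{tail_equiv} to select a ray in each $\PGL_2(\Z)$-orbit on the irrationals, then transfer to $\PSL_2(\Z)$ via Lemma~\ref{finite_index}. You are in fact more careful than the paper, explicitly handling the countable set of quadratic irrationals (where $f$-iteration is eventually periodic by Lagrange and so does not directly produce a simple ray) and verifying freeness on the complement so that Lemma~\ref{finite_index} applies as stated.
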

\begin{proof}
By Lemma~\ref{tail_equiv}, there is a Borel selection of one end in the graph
$G(a',\{\alpha,\gamma\})$, where $a'$ is the action of $\PGL_2(\Z)$ on
$\Irr$, and $\gamma(x) = 1/x$. Hence, this corollary follows by
Lemma~\ref{finite_index}, since $\PGL_2(\Z)$ is index 2 over $\PSL_2(\Z)$.
\end{proof}

The action of $\PSL_2(\Z)$ is free modulo a countable set, since if
$x = (ax + b)/(cx + d)$, then $x$ is the solution to a quadratic equation with
integer coefficients.
To finish, we need to analyze the
countable set on which the action is nonfree.

\begin{lemma}\label{comm_stab}
For every $x \in P^1(\mathbb{R})$, the stabilizer $\Stab(x)$ of $x$ in
$\PSL_2(\mathbb{Z})$ is cyclic.
\end{lemma}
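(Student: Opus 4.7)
The plan is to classify the elements of $\PSL_2(\Z)$ fixing $x$ according to whether the underlying matrix is parabolic or hyperbolic, and to show that in every case $\Stab(x)$ embeds into a closed one-parameter subgroup of $\PSL_2(\R)$ isomorphic to $\R$. Since $\PSL_2(\Z)$ is discrete in $\PSL_2(\R)$, its intersection with any such subgroup is a discrete subgroup of $\R$, hence cyclic.

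First I would observe that elliptic elements of $\PSL_2(\R)$ have no real fixed points, so only the identity, parabolics, and hyperbolics can lie in $\Stab(x)$. If $\gamma \in \PSL_2(\Z)$ has a representative $\begin{bmatrix} a & b \\ c & d \end{bmatrix}$ and fixes $x$, then $cx^2 + (d - a)x - b = 0$ (with $c = 0$ corresponding to $x = \infty$). A short integrality analysis then gives a dichotomy: a parabolic element of $\PSL_2(\Z)$ has trace $\pm 2$ and a unique fixed point (a rational or $\infty$, since the discriminant $t^2 - 4$ vanishes), while a hyperbolic element has integer trace $t$ with $|t| \geq 3$, and since $t^2 - 4$ is never a perfect square for such $t$, both of its fixed points are quadratic irrationals.

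In the rational case, $\PSL_2(\Z)$ acts transitively on $\Q \union \{\infty\}$, so I may conjugate to reduce to $x = \infty$; there the stabilizer consists of upper-triangular integer matrices with $ad = 1$, which forces $a = d = \pm 1$, giving $\Stab(\infty) = \langle \alpha \rangle \cong \Z$. In the quadratic irrational case, let $AX^2 + BX + C$ be the primitive integer minimal polynomial of $x$, with Galois conjugate $x'$. For any $\gamma \in \Stab(x) \setminus \{1\}$ the integer polynomial $cX^2 + (d-a)X - b$ vanishes at $x$, and hence is a rational multiple of the minimal polynomial, so its second root is also $x'$. Thus every element of $\Stab(x)$ also fixes $x'$, placing $\Stab(x)$ inside the pointwise stabilizer of $\{x, x'\}$ in $\PSL_2(\R)$; the latter is the closed Cartan subgroup isomorphic to $\R$ (conjugate to the positive diagonal subgroup). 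Finally, if $x$ is neither rational nor a quadratic irrational, the fixed-point equation forces $\Stab(x) = \{1\}$.

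The step requiring the most care is the observation in the quadratic irrational case that any integer matrix fixing $x$ automatically fixes $x'$ as well: this is the algebraic rigidity that reduces the a priori two-dimensional real Borel stabilizer to a one-dimensional Cartan, and it fails for an arbitrary real matrix fixing $x$. Everything else is routine once this point is in hand.
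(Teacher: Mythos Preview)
Your argument is correct, but it takes a genuinely different route from the paper's. The paper argues abstractly: since $\PGL_2(\R)$ acts transitively on $P^1(\R)$, every point stabilizer in $\PGL_2(\R)$ is conjugate to the affine group of the line and is therefore solvable, so $\Stab(x)$ in $\PSL_2(\Z)$ is solvable; then, invoking the Kurosh subgroup theorem for $\PSL_2(\Z) \cong (\Z/2\Z) * (\Z/3\Z)$, a solvable subgroup must be cyclic or a free product of two involutions, and the latter is ruled out because every involution in $\PSL_2(\Z)$ is conjugate to $\beta \colon x \mapsto -1/x$, which has no real fixed point. Your approach instead classifies elements by trace (elliptic/parabolic/hyperbolic), shows via the minimal-polynomial trick that any integer matrix fixing a quadratic irrational $x$ also fixes its Galois conjugate, and then embeds $\Stab(x)$ as a discrete subgroup of a one-parameter subgroup of $\PSL_2(\R)$. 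The paper's argument is shorter and leans on the free-product structure directly, while yours is more elementary (no Kurosh) and yields finer information: it explicitly identifies which $x$ have trivial stabilizer (those that are neither rational nor quadratic irrational) versus infinite cyclic stabilizer, and makes the connection to the hyperbolic geometry of $\PSL_2(\R)$ transparent.
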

\begin{proof}
  It suffices to show for all $x$ that $\Stab(x)$ is a solvable subgroup of $\PSL_2(\mathbb{Z})$
  containing no involution.  Indeed, as $\PSL_2(\mathbb{Z}) \cong (\Z/2\Z)
  * (\Z/3\Z)$, it follows from the Kurosh subgroup theorem 
  \cite[Theorem 7.8]{C} that
  all solvable subgroups are either cyclic or the free product of two involutions, and we are done upon precluding
  the latter alternative.
  
  Towards that end, first observe that the action of $\PGL_2(\mathbb{R})$ on $P^1(\mathbb{R})$ is transitive,
  and thus all stabilizers are conjugate to the stabilizer of the point at infinity.  This stabilizer is isomorphic to
  the group of affine transformations of the real line, and in particular is solvable.  Returning to $\PSL_2(\mathbb{Z})$,
  it follows that the stabilizer of every point is a subgroup of a solvable group, and hence is itself solvable.
  
  It remains to show that every nontrivial involution in $\PGL_2(\mathbb{Z})$ acts freely on $P^1(\mathbb{R})$.  But this
  is immediate as all such involutions are conjugate to $\beta \from x \mapsto -1/x$, which has no fixed point.
\end{proof}

We can now prove Theorem~\ref{PSL2_thm} from the introduction.

\begin{proof}[Proof of Theorem~\ref{PSL2_thm}.]
Let $R$ be a minimal relation generating $E$. Let $k = |R|$.
There is a finite index copy of $\F_2$ in $\PSL_2(\Z)$ and hence a
finite index copy of $\F_k$. Let the free generating set of $\F_k$ be $S$. Let $a$ be the restriction of the action of
$\PSL_2(\Z)$ to this copy of $\F_k$. Let $F \subset X$ be the subset on
which the action of $F$ is free. By Lemma~\ref{comm_stab}, the action
of $\PSL_2(\Z)$ on $F$ has cyclic stabilizers, and so by Lemma~\ref{wagon_realization_lemma},
there is a realization of $E$ witnessed by letting the generators $S$ of
$F_k$ witness the elements of $R$. Since $X \setminus F$ is a subset of
the quadratic rationals it is countable, and so the sets
realizing $E$ on $X \setminus F$ are Borel.

Now on $F$, the graph $G(a \restriction F,S)$ has a Borel selection of
finitely many ends by Corollary~\ref{end_sel_psl2} and Lemma~\ref{finite_index}.
Hence, by Lemma~\ref{end_sel_path_decomp} we have a Borel path
decomposition and hence by Lemma~\ref{realization_lemma} there is a
realization of $E$ on $a \restriction F$ once again with the $i$th
generator witnesses the $i$th congruence in $R$. The theorem follows by
taking the union of these two realizations.
\end{proof}

\section{Applications of path decompositions in Borel combinatorics}
\label{comb_applications}

If $G$ is a locally finite acyclic Borel graph, then path decompositions
for $G$ give a very strong type of unfriendly coloring:

\begin{lemma}\label{path_unfriendly}
  Suppose $G$ is a locally finite acyclic Borel graph on $X$ where every
  vertex has degree at least $2$. Then if $G$ has a Borel path decomposition
  of length at least $4$, then $G$ admits a Borel unfriendly coloring.
  Indeed, there is a Borel function $c \from X \to 2$ such that for every
  $x$, $|\{y \in N(x) \colon c(x) = c(y)\}| \leq 1$.
\end{lemma}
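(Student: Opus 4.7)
The plan is to construct the coloring inductively along the stages $P_0, P_1, \ldots$ of the path decomposition, exploiting end-ordering crucially. For each vertex $x$, let $i(x)$ denote the least index $i$ such that $x$ is a vertex of some path in $P_i$; end-ordering forces $x$ to be a leaf of its path in any subsequent stage $j > i(x)$ in which it appears. Consequently, when processing a path $p = v_0, v_1, \ldots, v_\ell \in P_i$ at stage $i$, all interior vertices of $p$ are uncolored, and only the two endpoints $v_0, v_\ell$ may have been colored in an earlier stage. I will pick (in a Borel way, e.g.\ lex-least) a $2$-coloring of the uncolored vertices of $p$ subject to two rules: (a) if an endpoint $v_0$ (resp.\ $v_\ell$) is pre-colored, then its path-neighbor $v_1$ (resp.\ $v_{\ell-1}$) is colored opposite to it; (b) the resulting coloring of $p$ contains no three consecutive vertices of the same color.

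Rule (a) ensures that processing $p$ does not contribute a new same-color neighbor to an endpoint whose primary path has already been processed, and rule (b) ensures that each interior vertex of $p$ picks up at most one same-color neighbor along $p$. The main obstacle is to verify that rules (a) and (b) are simultaneously satisfiable whenever $\ell \geq 4$. This reduces to a short case analysis on which endpoints are pre-colored. The tightest case is when both endpoints are pre-colored with the same color $c$: rule (a) forces $v_1 = v_{\ell-1} = \neg c$, and for $\ell = 4$ the no-three-in-a-row constraint at $v_2$ forces $v_2 = c$, yielding the alternating sequence $c, \neg c, c, \neg c, c$; for $\ell \geq 5$ there is strictly more room to interpolate (perhaps using one ``double''). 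The remaining cases (at most one pre-colored endpoint) are easier because at least one boundary is unconstrained; in particular, the length assumption $\ell \geq 4$ is crucial here, since for $\ell = 2$ the two opposite-color constraints on a single interior vertex can directly conflict.

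After the construction terminates, every vertex $x$ has been colored (since every $x$ with $\deg(x) \geq 2$ lies in some path of the decomposition) and every edge of $G$ has been considered (since the $P_i$ partition the edges). For any $x$, every same-color neighbor of $x$ must lie in $x$'s primary path $p(x) \in P_{i(x)}$: rule (a), applied at every later stage in which $x$ appears as an endpoint, eliminates same-color neighbors coming from non-primary paths containing $x$, and rule (b) applied at stage $i(x)$ bounds the number of same-color neighbors of $x$ within $p(x)$ by one. Hence the resulting function $c \from X \to 2$ is Borel and strongly unfriendly.
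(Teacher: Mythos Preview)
Your proof is correct and follows essentially the same inductive scheme as the paper: process the path decomposition stage by stage, and at each path force the neighbor of any pre-colored endpoint to take the opposite color while keeping at most one ``double'' along the interior. The paper phrases the interior invariant as ``every vertex of $p$ has at most one adjacent vertex of the same color'' rather than your equivalent ``no three consecutive equal colors,'' and it first invokes Lemma~\ref{good_path_decomp} to bound the path lengths (a convenience you omit but do not actually need, since each path is finite and the lex-least valid extension is already a Borel selection).
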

\begin{proof}
  Suppose $P_0, P_1, \ldots$ is the Borel path decomposition of $G$ of
  length at least $4$. We may assume that this path decomposition satisfies
  the conclusion of Lemma~\ref{good_path_decomp}.

  We inductively construct $c$. At step $i$ we will ensure that every
  vertex in a path $p \in P_i$ has been colored. For all such paths $p \in
  P_{i}$, inductively, the only vertices in $p$ that can have already been
  colored must be endpoints of $p$. Hence, there is some extension of our
  partial coloring so that every vertex of $p$ has at most one adjacent vertex of
  the same color, and the endpoint of $p$ have neighbors of the opposite
  color. For example, alternate between the two colors along $p$, possibly
  breaking parity once in the middle of the path. (The reason here paths of
  length $3$ cannot work is that if the endpoints of such a path were already assigned opposite
  colors, one of the endpoints would then gain another vertex of the same
  color).
  Since $P_i$ is a path
  decomposition, each vertex is an interior vertex of at most one path, and
  every edge is contained in some path. Hence, our final coloring $c$ of
  $X$ has the desired property that each vertex has at most one neighbor of
  the same color.
\end{proof}

By combining this Lemma with Lemma~\ref{path_decomp}, we obtain
Theorem~\ref{unfriendly_coloring} as a Corollary.

Suppose $G$ is an acyclic locally finite Borel graph where every vertex has
degree at least $3$. Then an almost identical greedy construction shows that if $G$ has a path
decomposition of length at least $3$, then $G$ has a Borel perfect
matching, and if $G$ has maximum degree $d$, then $G$ has a Borel
$d$-list-coloring for any Borel assignment of lists to edges of $G$. For example, this gives a new way of proving a Baire
measurable version of Vizing's theorem for acyclic bounded degree Borel
graphs, and the existence of Baire measurable perfect matchings for acyclic
locally finite Borel graphs.

\end{document}